\documentclass[reqno,10pt,a4paper]{amsart}
\usepackage{srcltx}
\usepackage{graphicx,color,amssymb,latexsym,amsmath,amsfonts}
\usepackage{mathrsfs}
% settings

\numberwithin{equation}{section}
\theoremstyle{plain}
\newtheorem{keyword}{keyword}
\newtheorem{thm}{Theorem}[section]

\newtheorem{lemma}{Lemma}[section]

\newtheorem{proposition}{Proposition}[section]

\begin{document}

\title{On Consistent   Hypothesis Testing}

% indicate corresponding author with \corref{}
% \author{\fnms{John} \snm{Smith}\thanksref{t2}\corref{}\ead[label=e1]{smith@foo.com}\ead[label=e2,url]{www.foo.com}}
% \thankstext{t2}{Thanks to somebody}
% \address{line 1\\ line 2\\ \printead{e1}\\ \printead{e2}}
\
\author{Mikhail Ermakov}\\

\maketitle
 
{erm2512@gmail.com}\\
{Institute of Problems of Mechanical Engineering, RAS and\\
St. Petersburg State University, St. Petersburg, RUSSIA\\
{Mechanical Engineering Problems Institute\\
Russian Academy of Sciences\\
Bolshoy pr.,V.O., 61\\
St.Petersburg\\
Russia}}

\begin{abstract} We establish natural links between different types of consistency: consistency, uniform consistency, pointwise consistency and strong consistency. In particular, we show that the existence of pointwise consistent tests implies the existence of discernible (strongly pointwise consistent)  tests.
    Implementing these results we explore both sufficient conditions and necessary conditions for existence of different types of consistent  tests for the problems of  hypothesis testing on a probability measure of independent sample, on  a mean measure of Poisson process, on a solution of linear ill-posed problem in Gaussian noise, on a solution of deconvolution problem and for a signal detection in Gaussian white noise.
In the last three cases we show that
   necessary conditions and sufficient conditions  coincide.
\end{abstract}

\begin{keyword}[class=AMS]
[Primary ]{62F03},{62G10},{62G20}
%\kwd{}
%\kwd[; secondary ]{}
\end{keyword}

\begin{keyword}
{hypothesis testing},{consistency},{distinguishability}
%\kwd{}
\end{keyword}

% history:

%\tableofcontents

\section{\bf  Introduction \label{s1}}

The problems of consistent estimation and consistent classification are rather well studied. The universal consistency
 was established for  basic statistical procedures. The necessary and sufficient  conditions  for uniform consistency were obtained. In hypothesis testing the situation is more complicated. The results have a disordered character.   The  paper goal  is

 to  represent a systematic viewpoint to this problem on the base of simple methods and new results,

 to establish natural links between different types of consistency,

 to find both necessary and sufficient conditions of distinquishability of hypotheses  for different types of consistency and for different setups.

    In the works of Berger \cite{be}, Kraft \cite{kr}, Hoefding and Wolfowitz \cite{ho}, Le Cam and Schwartz \cite{les} and Le Cam \cite{le} the problem of existence of consistent tests has obtained a rather deep development. The paper goal is to look at this problem from viewpoint of new research and applications. In many new researches  existence of consistent tests is considered for different classes of problems using special methods. However the consistency of tests as independent statistical problem was not explored systematically.

 In parametric hypothesis testing the consistency of tests is usually evident.
 The consistency of traditional nonparametric tests  (Kolmogorov, $\omega^2$, chi-squared, kernel-based, ...) is also well studied (Lehman and Romano \cite{le}, Balakrishnan,  Nikulin and  Voinov \cite{bnv}, Horowitz and Spokoiny \cite{hosp}, Ermakov \cite{er11} and references therein). For hypothesis testing in functional spaces the uniform consistency   is explored intensively for approaching nonparametric sets of alternatives (Ingster and Suslina \cite{is}, Comminges and  Dalalyan \cite{dal} and references therein). The conditions of distinquishability in these works  satisfy  the conditions of this paper although they have completely another form. In all these setups the hypothesis can be usually considered as a simple.

Interesting problems emerge for nonparametric sets of hypotheses. Such a setup has been explored

 for hypothesis testing that the density has some properties: unimodality, convexity, has a compact support, \ldots (Donoho \cite{do} and Devroye and Lugosi \cite{de}),

 for the problem of classification of probability measures on two classes (Pfanzagl \cite{pf}, Kulkarni and Zeitouni \cite{ku} and Dembo and Peres \cite{dep}),

 for semiparametric hypothesis testing on a sample mean (Bahadur and Savage \cite{bah}, Cover \cite{co} and Dembo and Peres \cite{dep}) and for a more complicated statistical functionals such as  Fisher information (Donoho \cite{do} and Devroye and Lugosi \cite{de}),

 for decernibility of  probability measures  of ergodic  processes (Nobel \cite{no}).

 Different approaches are implemented to the study of consistency of tests. The consistency of tests is explored as usual consistency, uniform consistency, pointwise consistency or discernibility (strong consistency). A sequence of tests is called discernible (Dembo and Peres \cite{dep}, Devroye and Lugosi \cite{de})   if the sequence of tests makes almost surely only finitely number of errors.

 We show that there are natural links between these types of consistency.
 The existence of pointwise consistent  tests implies the existence of discernible  tests.
The existence of  consistent  tests implies that the set of alternatives can  be represented as a countable union of nested subsets  such that, for these subsets,   there are uniformly consistent tests. A similar statement was established also for  pointwise consistent tests.

 Thus the problems of description of sets of hypotheses admitting the consistent or discernible tests are reduced to similar problems for  uniformly consistent tests.

On the base of these statements we explore  both sufficient conditions and necessary conditions for existence of uniformly consistent,
 consistent, pointwise consistent and discernible  tests in
  the problems of hypotheses testing

--  on a probability measure of i.i.d.r.v.'s,
  
-- on a value of statistical statistical functional depending on a probability measure of i.i.d.r.v.'s,

-- on a mean measure of Poisson process,

-- on covariance operator of i.i.d. Gaussian vectors in Hilbert space,

-- on a solution of linear ill-posed problem in Hilbert space if the  noise is Gaussian,

--  on a solution of deconvolution problem,

-- for signal detection in Gaussian white noise.

  For  signal detection in Gaussian white noise, hypothesis testing on a solution of linear ill-posed problem in Gaussian noise and on a solution of deconvolution problem, we find necessary and sufficient conditions for existence of  uniformly consistent tests if the sets of hypotheses and alternatives are bounded. If the sets of hypotheses is bounded, the necessary and sufficient conditions for existence of  consistent tests are established as well.

 For  hypothesis testing on a probability measure of i.i.d.r.v.'s  the consistency of tests were explored in many works for different setups. In the paper the main attention  focuses on the necessary conditions.   Le Cam and Schwartz \cite{les} established necessary and sufficient conditions for  existence of uniformly consistent estimators.  The conditions were  provided in terms of  weak topology (the $\tau$-topology) of setwise convergence on all $n$-fold products  of probability measures and  are not readily verifiable. Hypothesis testing is a particular case of this setup.
 In the paper we discuss one dimensional version of  Le Cam and Schwartz conditions. If the sets of densities of probability measures of hypotheses and alternatives are  uniformly integrable  for some probability measure, the necessary and sufficient conditions for existence of uniformly consistent tests (distinguishability of hypotheses) can be provided in a more simple form.
 The sets of probability measures of hypothesis and alternatives are distinquishable iff they have disjoint closures in the weak topology of setwise convergence (see Theorem \ref{b7}). By wellknown theorems of functional analysis, this form of distinquishability conditions straightforwardly follows from the original Le Cam - Schwartz Theorem.  Note that the Le Cam and Schwartz Theorem \cite{les} is not mentioned in the most  of subsequent works on existence of consistent tests.
 
 The condition of uniform integrability of densities  is satisfied  for all considered recently problems of distinguishability of hypotheses with approaching sets of  alternatives (see Ingster and Suslina \cite{is}, Comminges and  Dalalyan \cite{dal} and references therein) as well as for the most part of semiparametric setups of problems (see Donoho \cite{do} and  Devroye and Lugosi \cite{de}). At the same time the traditional tests of nonparametric hypothesis testing cover all possible sets of probability measures. To cover such a setups we explore in subsection 4.3 the necessary and sufficient conditions of distinguishability for the problem of hypothesis testing on a functional value. All functionals based on distance method  
  as well as a wide class of functionals having Hadamard derivative satisfy the proposed conditions. Note that the sets of hypotheses and alternatives admitting the assignment in a simple form can be separated usually a finite number of functionals. Earlier necessary and sufficient conditions of distinguishability of functionals has been obtained only for the sample mean  (Cover \cite{co}, Dembo and Peres \cite{dep}).
 
 Le Cam and Schwartz Theorem \cite{les} allows to obtain  compact necessary and sufficient conditions of distinguishability of hypotheses on  the probability measure of i.i.d. Gaussian vectors in Hilbert space. These conditions are based on theorems on weak convergence of Gaussian measures (see Bogachev \cite{bog}).  Some invariance of weak topology with respect to linear transformations allows to extend these results on the problems of signal detection in Gaussian white noise and hypotheses testing on a solution of linear ill-posed problem with Gaussian noise.
 
 For hypothesis testing,
on a solution of linear ill-posed problem with Gaussian  noise,
 on a solution of deconvolution problem
 and for a signal detection in Gaussian white noise the necessary and sufficient conditions of existence of consistent and uniformly consistent tests are also provided in terms of weak topology (section 5).  Here the weak topologies in Hilbert space and in $L_2$ are implemented. The  conditions are akin to Le Cam and Schwartz Theorem.  The  results in terms of weak topology  seem rather unexpected. In estimation the conditions of consistency and uniform consistency are provided in terms of strong topologies. Thus one of the goal of the paper is to show  the role of weak topology in the study of consistency of tests.  Such an approach to the study of existence of  uniformly consistent and discernible tests has been developed Le Cam and Schwarts \cite{les}, Dembo and Peres \cite{dem} and Kulkarni and Zeitouni \cite{ku}. Note that the conditions of existence of uniformly consistent tests for signal detection in Gaussian white noise have been known earlier (Ermakov \cite{er00}). These conditions were given in another form (see Theorem \ref{t1}).

 For a long time it seems rather difficult to obtain  indistinquishability  conditions. Special examples of indistinquishability of hypotheses have been proposed. The first result on indistinguishability  of hypotheses in functional spaces has been obtained Le Cam \cite{le73}. For  hypothesis testing
on a density Le Cam \cite{le73} has established that the center of the ball and the interior of the ball in $L_1$ are indistinguishable. The further results were mostly related to the problem of signal detection in Gaussian white noise. Ibragimov and Hasminski \cite{ih} have shown  that the center of a ball and the interior of a ball in $L_2$ are indistinguishable. Burnashev \cite{bu} has proved the indistinguishability of
the interior of a ball in $L_p$- spaces, $p> 0$. For the problem of testing of simple hypothesis Janssen \cite{ja} has  showed that  any test can achieve high asymptotic power only on at most finite dimensional space of alternatives.
  For other setups we mention Ingster \cite{in93}
and Ingster and Kutoyants \cite{ik} works.  Ingster \cite{in93}
and Ingster and Kutoyants \cite{ik} showed  that we could not distinguish the center of a ball and the interior of a ball in $L_2$  in the problems of  hypothesis
testing on a density  and on an intensity function of Poisson process  respectively.

 The paper is organized as follows.
 The general setup and the definitions of consistency, pointwise consistency, distinguishability and discernibility are provided in section 2. Section 2 contains also the basic technique implemented to the proof of results. The abovementioned links of  conditions for existence of uniformly consistent, consistent, pointwise consistent and discernible tests  are established in section 3.   The necessary conditions and sufficient conditions for existence of
 consistent, uniformly consistent and discernible  tests for  hypothesis testing on a probability measure of i.i.d.r.v.'s are provided in section 4.
 Other problems of hypotheses testing are explored in section 5. If the proof of Theorem or Lemma does not provided in section, this proof  can be found   to the Appendix.

Denote by letters $c$ and $C$ generic constants. Denote $1_A(x)$ the indicator of  set $A$. Denote $[a]$ the integer part of $a \in R^1$. For any measures $P_1, P_2$ denote $P_1\otimes P_2$ the product of measures $P_1$ and $P_2$. For any measures $P_1, P_2$ we shall write $P_1 << P_2$ if measure $P_1$ is absolutely continuous with respect to $P_2$.
\section{Preliminaries \label{s2}}
\subsection{General setup. Definitions of consistency, pointwise consistency, discernibility, uniform consistency \label{ss21}}
  Let $  \frak{E}_n = (\Omega_n,  \frak{B}_n,  \frak{P}_n)$ be a sequence of statistical experiments  where $(\Omega_n,  \frak{B}_n)$  are sample spaces with $\sigma$-fields of Borel sets $\frak{B}_n$ and  $ \frak{P}_n= \{P_{\theta,n}, \theta \in \Theta\}$  are  families of probability measures.

We wish to test a hypothesis $H_0: \theta \in \Theta_0 \subset \Theta$ versus alternative $H_1: \theta \in \Theta_1 \subset \Theta$.

For any test $K_n$ denote $\alpha_{\theta}(K_n), \theta \in \Theta_0$, and $\beta_{\theta}(K_n), \theta \in \Theta_1,$ its type I and type II error probabilities respectively.

Denote
$$
\alpha(K_n) = \sup_{\theta \in \Theta_0} \alpha_{\theta}(K_n) \quad
\mbox{\rm and}
\quad
\beta(K_n) = \sup_{\theta \in \Theta_1} \beta_{\theta}(K_n).
$$
Below the definitions of different types of consistency are provided.

A sequence of tests $K_n$ is  pointwise consistent  (Lehmann and Romano \cite{le})  if
$$
\lim_{n\to \infty}\alpha_{\theta_0}(K_n)  = 0\quad
\mbox{\rm and}
\quad \lim_{n\to \infty}\beta_{\theta_1}(K_\epsilon)  = 0
$$
for all $\theta_0\in \Theta_0$ and $\theta_1 \in \Theta_1$.

A sequence of tests $K_n$ is  consistent (van der Vaart \cite{van}) if
$$
\lim_{n\to \infty} \alpha(K_n) = 0\quad
\mbox{\rm and}
\quad \lim_{n\to \infty}\beta_{\theta_1}(K_n)  = 0
$$
for all  $\theta_1 \in \Theta_1$.

A sequence of tests $K_n$  is uniformly consistent ( Hoefding and Wolfowitz \cite{ho}) if
$$
\lim_{n\to \infty} \alpha(K_n) = 0\quad
\mbox{\rm and}\quad
\lim_{n\to \infty} \beta(K_n) = 0.
$$
Hypothesis $H_0$ and  alternative $H_1$ are called distinguishable (Hoefding and Wolfowitz \cite{ho}) if  there is  uniformly  consistent sequence of tests.

Hypotheses $H_0$ and  alternative $H_1$ are called indistinguishable ( Hoefding and Wolfowitz \cite{ho}) if, for each $n$ and for each  test $K_n$, we have
$$\alpha(K_n) + \beta(K_n) \ge 1.$$

In subsections  5.1 and 5.2 we consider families of statistical experiments $  \frak{E}_\epsilon = (\Omega_\epsilon,  \frak{B}_\epsilon,  \frak{P}_\epsilon)$ depending on a continuous parameter $\epsilon>0$. For these setups we implement similar definitions of consistency.

 A sequence of tests $K_n$ is called discernible ( Devroye, Lugosi \cite{de} and Dembo, Peres \cite{dep}) or strong consistent (  van der Vaart \cite{van})  if
 \begin{equation}\label{m5}
 P(K_n = 1 \quad\mbox{for only finitely many}\quad n) =1 \quad\mbox{for all}\quad P \in \Theta_0
 \end{equation}
  and
 \begin{equation}\label{m6}
 P(K_n = 0\quad \mbox{for only finitely many}\quad n) =1 \quad\mbox{for all}\quad P \in \Theta_1.
 \end{equation}
 \subsection{Basic technique \label{ss23}} The proof of distinguishability is based on the following reasoning.

 Let we wish to test the hypotheses on a distribution of random variable $X$ defined on probability space $(\Omega, \frak{B},P)$. For the test $K(X)\equiv\alpha, 0 < \alpha <1$, we get
$$\alpha_P(K) + \beta_Q(K)=1\quad \mbox{for all}\quad P \in \Theta_0 \quad\mbox{and}\quad Q \in \Theta_1.$$
Thus, it is of interest,  to search for the test $K$ such that
$$
 \alpha_P(K) + \beta_Q(K) < 1-\delta,\quad \delta>0 \quad \mbox{for all}\quad P \in \Theta_0 \quad\mbox{and}\quad Q \in \Theta_1$$
 or, other words,
 \begin{equation}\label{e1}
 \int K dP + \int (1-K)\, dQ < 1-\delta.
 \end{equation}
 In this case we  say that  hypotheses and alternatives are weakly distinguishable.

For any $\epsilon > 0$ we can approximate the function $K$ by simple function
$$
K_0(x) = \sum_{i=1}^k c_i 1_{A_i}(x), \quad x \in \Omega
$$ such that
\begin{equation}\label{111} |K(x) - K_0(x)| < \epsilon, \quad x \in \Omega.
\end{equation}
Here $\{A_1,\ldots,A_k\}$ is a partition of $\Omega$.

Substituting $K_0$ in (\ref{e1}) and using (\ref{111}), we get
\begin{equation}\label{e2}
\sum_{i=1}^k c_i (Q(A_i)-P(A_i) )  \ge \delta - 2\epsilon.
\end{equation}
Hence we obtain the following Proposition.
\begin{proposition}\label{pr1} Hypothesis $H_0$ and alternative $H_1$ are weakly distinguishable  iff there is  a partition $A_1,\ldots,A_k$ of $\Omega$ such that the sets
$$
V_0 = \{v  = (v_1,\ldots,v_k) : v_1 = P(A_1),\ldots,v_k = P(A_k),\,\,\, P\in \Theta_0\} \subset R^k
$$
and
$$
V_1 = \{v  = (v_1,\ldots,v_k) : v_1 = Q(A_1),\ldots,v_k = Q(A_k),\,\,\, Q\in \Theta_1\} \subset R^k
$$
have disjoint closures.
\end{proposition}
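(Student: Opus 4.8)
The plan is to prove the two implications separately, reading the coefficient vector of a separating simple function as a separating hyperplane for $V_0$ and $V_1$ in $R^k$.

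\textbf{Necessity.} Suppose $H_0,H_1$ are weakly distinguishable, so there is a test $K$ and a $\delta>0$ satisfying (\ref{e1}). First I would fix $\epsilon<\delta/4$ and invoke the approximation (\ref{111}) to replace $K$ by the simple function $K_0=\sum_{i=1}^k c_i 1_{A_i}$, which simultaneously produces the partition $A_1,\dots,A_k$ and, through inequality (\ref{e2}), the bound $\sum_{i=1}^k c_i\,(Q(A_i)-P(A_i))\ge \delta-2\epsilon>0$ valid for every $P\in\Theta_0$ and $Q\in\Theta_1$. Reading $c=(c_1,\dots,c_k)$ as a linear functional on $R^k$, this says $\langle c,w\rangle-\langle c,v\rangle\ge\delta-2\epsilon$ for all $v\in V_0$ and $w\in V_1$, hence $\inf_{w\in V_1}\langle c,w\rangle\ge\sup_{v\in V_0}\langle c,v\rangle+(\delta-2\epsilon)$. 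Since $\langle c,\cdot\rangle$ is continuous, the same strict gap persists on the closures, so $\overline{V_0}$ and $\overline{V_1}$ lie in two disjoint closed half-spaces and are therefore disjoint.

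\textbf{Sufficiency.} Here I would run the argument backwards. Note that $V_0,V_1$ lie in the probability simplex of $R^k$, so $\overline{V_0},\overline{V_1}$ are compact. The goal is a vector $c\in R^k$ and a $\delta>0$ with $\langle c,w\rangle-\langle c,v\rangle\ge\delta$ for all $v\in V_0$ and $w\in V_1$: once this is available, rescaling and shifting $c$ into $[0,1]^k$ (which is harmless because $\sum_i(w_i-v_i)=0$ on the simplex, so a common shift leaves the separation value unchanged) turns $K_0=\sum_i c_i 1_{A_i}$ into a genuine test, and reversing the computation that led to (\ref{e2}) recovers (\ref{e1}), i.e. weak distinguishability. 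Producing this $c$ is a strict separating-hyperplane statement.

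\textbf{Main obstacle.} The delicate point is exactly this separation in the sufficiency part: the separation theorem needs two disjoint compact \emph{convex} sets, whereas disjointness of $\overline{V_0}$ and $\overline{V_1}$ by itself does not prevent their convex hulls from meeting. Indeed, on $\Omega=\{1,2\}$ with $\Theta_0$ forcing $P(A_1)\in[0,\tfrac15]\cup[\tfrac45,1]$ and $\Theta_1$ forcing $Q(A_1)\in[\tfrac25,\tfrac35]$, the closures $\overline{V_0},\overline{V_1}$ are disjoint yet no test separates them, so disjoint closures cannot be read literally. I would close this gap by observing that the defining inequality (\ref{e1}) is affine in $P$ and in $Q$, whence $H_0,H_1$ are weakly distinguishable iff $\mathrm{conv}(\Theta_0)$ and $\mathrm{conv}(\Theta_1)$ are, and the partition map carries these to $\mathrm{conv}(V_0)$ and $\mathrm{conv}(V_1)$. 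Passing to the convex hulls, the relevant disjointness is that of the compact convex sets $\overline{\mathrm{conv}(V_0)}$ and $\overline{\mathrm{conv}(V_1)}$, and the standard strict separation theorem for disjoint compact convex sets in $R^k$ then supplies the vector $c$ and the gap $\delta$. The clean reading of the statement is therefore in terms of the convex hulls, and under that reading the sufficiency direction goes through.
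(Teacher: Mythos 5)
Your necessity argument is exactly the paper's own proof: the paper's entire justification of Proposition \ref{pr1} is the chain (\ref{e1}) $\to$ (\ref{111}) $\to$ (\ref{e2}) --- approximate the test $K$ by a simple function $K_0=\sum_{i=1}^k c_i 1_{A_i}$ and read off that the coefficient vector $c$ separates $V_0$ from $V_1$ with gap $\delta-2\epsilon>0$; your half-space formulation just makes that computation explicit. So for this direction you and the paper coincide.

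Where you genuinely depart from the paper is the sufficiency direction, and you are right to flag it. The paper offers no argument for the converse at all (the displayed derivation only runs from (\ref{e1}) to (\ref{e2}); the converse is asserted by the words ``Hence we obtain the following Proposition''), and your two-point example shows the converse is false as literally stated: on $\Omega=\{1,2\}$ every test satisfies $\alpha_P(K)+\beta_Q(K)=1+\bigl(K(1)-K(2)\bigr)(p-q)$ with $p=P(\{1\})$, $q=Q(\{1\})$, which cannot be bounded away from $1$ when $p-q$ changes sign over $\Theta_0\times\Theta_1$, even though the corresponding $V_0$ and $V_1$ are disjoint compacta. The underlying point is structural: a single test acts on $(P,Q)$ only through linear functionals, so weak distinguishability is equivalent to \emph{linear} separation of $V_0$ and $V_1$, which disjointness of closures of non-convex sets does not supply. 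Your repair --- observing that (\ref{e1}) is affine in $P$ and $Q$, so weak distinguishability is unchanged under passing to convex hulls, then strictly separating the disjoint compact convex sets $\overline{\mathrm{conv}(V_0)}$ and $\overline{\mathrm{conv}(V_1)}$ and normalizing $c$ into $[0,1]^k$ by a shift (harmless since coordinates sum to one on both sides) and a positive rescaling --- is correct and is the minimal fix; it is also consonant with how convexity enters elsewhere in the paper, e.g.\ Kraft's bound in Theorem \ref{t5} is stated for the convex hulls $[\Theta_0]$, $[\Theta_1]$. Note finally that the correction does no damage downstream: the reduction to the multinomial problem and the proof of Lemma \ref{l33} invoke only the necessity half of Proposition \ref{pr1}, since from disjoint closures they obtain \emph{distinguishability} via uniformly consistent estimators of the cell probabilities (a nonlinear, many-observation test), not weak distinguishability --- which is precisely the asymmetry your counterexample exhibits.
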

Suppose we wish to test a hypothesis on probability measure of i.i.d.r.v.'s.
By Proposition \ref{pr1}, if the hypothesis and the alternative are weakly distinguishable, the problem can be reduced to hypothesis testing for the multinomial distribution.
 For this problem we immediately get the distinguishability. Thus the weak distinguishability implies  the distinguishability  (Hoefding and Wolfowitz \cite{ho}).

 For the problem of hypothesis testing on multinomial distribution the likelihood ratio tests have exponential decay of type I and type II error probabilities. Hence the  distinguishability implies also ( Le Cam \cite{le73} and Schwartz \cite{sch}) the existence of  sequence of tests $K_n$
 and constant $n_0$ such that
 \begin{equation}\label{x1}
 \alpha(K_n) \le \exp \{-cn\} \quad \mbox{and} \quad \beta(K_n) \le \exp\{-cn\}
\end{equation}
 for all $n> n_0$.

   The exponential decay of type I and type II error probabilities has been studied
  in a large number of papers (see Hoefding and Wolfowitz \cite{ho},  Dembo and Zeitouni \cite{dem}, Barron \cite{ba}, Ermakov \cite{er93} and references therein).

  Remark. In a more  cumbersome form similar distinquishability conditions have been proved Berger \cite{be} for the case of i.i.d.r.v.'s.  Le Cam  implemented the approximation of tests by simple functions   in the proof of Lemma 4 in \cite{le73} as auxiliary technique.
\section{Links of consistency, pointwise consistency, uniform consistency and discernibility}
 The results  are provided for  hypothesis testing on a probability measure of i.i.d.r.v.'s. For other setups (see subsection 5.1) similar results are obtained by easy modification of the reasoning.

Let $X_1,\ldots,X_n$ be i.i.d.r.v.'s on a probability space $(\Omega, \frak{B}, P)$
 where $\frak{B}$ is $\sigma$-field of Borel sets on  topological space $\Omega$.
 Denote $\Lambda$ the set of all probability measures on $(\Omega, \frak{B})$ and let $\Theta=\Lambda$.
 \begin{thm} \label{b2} There is a consistent sequence of tests  iff there are nested subsets   $\Theta_{1i} \subseteq \Theta_{1,i+1}, \Theta_1 = \cup_{i=1}^\infty \Theta_{1i},$ such that,
for each $i$,  the hypothesis $H_0: P \in\Theta_{0}$ and the alternative $H_{1i}: P \in \Theta_{1i}$ are distinguishable.
\end{thm}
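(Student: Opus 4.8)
The plan is to prove the two implications separately. Throughout I would write $\beta_{\Theta_{1i}}(K_n)=\sup_{Q\in\Theta_{1i}}\beta_Q(K_n)$, so that distinguishability of $H_0$ and $H_{1i}$ means the existence of tests $K_n$ with $\alpha(K_n)\to 0$ and $\beta_{\Theta_{1i}}(K_n)\to 0$, while consistency of a sequence means $\alpha(K_n)\to 0$ together with $\beta_Q(K_n)\to 0$ for each individual $Q\in\Theta_1$.

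For the sufficiency direction ($\Leftarrow$) I would build a single consistent sequence by concatenating the uniformly consistent sequences attached to the pieces. Distinguishability of $H_0$ and $H_{1i}$ supplies, for every $i$, a sequence $K^{(i)}_n$ with $\alpha(K^{(i)}_n)\to 0$ and $\beta_{\Theta_{1i}}(K^{(i)}_n)\to 0$. I would then pick integers $N_1<N_2<\cdots$ so that $\alpha(K^{(i)}_n)<1/i$ and $\beta_{\Theta_{1i}}(K^{(i)}_n)<1/i$ for all $n\ge N_i$, and set $K_n=K^{(i)}_n$ on each block $N_i\le n<N_{i+1}$. On such a block $\alpha(K_n)<1/i$, so $\alpha(K_n)\to 0$ since the block index tends to infinity; and for a fixed $Q\in\Theta_1$, choosing $j$ with $Q\in\Theta_{1j}$, nestedness gives $Q\in\Theta_{1i}$ for every block index $i\ge j$, whence $\beta_Q(K_n)\le\beta_{\Theta_{1i}}(K_n)<1/i\to 0$. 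This yields a consistent sequence, and this direction is essentially routine.

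For the necessity direction ($\Rightarrow$) I would start from a consistent sequence $K_n$, fix $N$ with $\alpha(K_n)<1/4$ for all $n\ge N$, and define
\[
\Theta_{1i}=\{Q\in\Theta_1:\ \beta_Q(K_n)\le 1/4\ \text{for all}\ n\ge i\}.
\]
Enlarging $i$ only removes constraints, so the $\Theta_{1i}$ are nested; and since $\beta_Q(K_n)\to 0$ for each $Q$, every $Q$ falls into $\Theta_{1i}$ once $i$ is large, giving $\cup_i\Theta_{1i}=\Theta_1$. It then remains to verify distinguishability of $H_0$ and $H_{1i}$ for each $i$, and by nestedness it suffices to treat $i\ge N$, the smaller sets being subsets of $\Theta_{1N}$. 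Fixing $i\ge N$ and regarding $K_i$ as a single test on the product space $\Omega^i$, whose hypothesis and alternative are $\{P^{\otimes i}:P\in\Theta_0\}$ and $\{Q^{\otimes i}:Q\in\Theta_{1i}\}$, one has $\int K_i\,dP^{\otimes i}+\int(1-K_i)\,dQ^{\otimes i}\le\alpha(K_i)+\beta_{\Theta_{1i}}(K_i)\le 1/2$, so these families are weakly distinguishable on $\Omega^i$ in the sense of (\ref{e1}).

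The hard part will be the passage from this weak distinguishability at one fixed sample size to genuine distinguishability of $H_0$ and $H_{1i}$, which requires a whole uniformly consistent sequence. The bridge I would use is to treat the block $Y=(X_1,\dots,X_i)\in\Omega^i$ as the elementary i.i.d. observation, so that a sample $X_1,\dots,X_{im}$ becomes $m$ i.i.d. copies $Y_1,\dots,Y_m$ distributed as $P^{\otimes i}$ or $Q^{\otimes i}$; applying Proposition \ref{pr1} with sample space $\Omega^i$ and the subsequent multinomial/likelihood‑ratio reduction of the basic technique to these copies upgrades weak distinguishability to distinguishability of $\{P^{\otimes i}\}$ and $\{Q^{\otimes i}\}$, producing uniformly consistent tests that are functions of $X_1,\dots,X_{im}$ and hence legitimate tests for the original problem. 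I expect the only genuinely delicate point to be resisting the temptation to apply Proposition \ref{pr1} on $\Omega$, where weak distinguishability is \emph{not} available, and instead invoking it on the product space $\Omega^i$ with blocks as the repeated observations; the remaining steps are bookkeeping.
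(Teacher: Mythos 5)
Your proof is correct and follows essentially the same route as the paper: extract from the consistent sequence the subsets of alternatives on which the single test $K_i$ has type I and type II errors with sum uniformly bounded away from $1$, observe that this is precisely weak distinguishability (with $K_i$ viewed as a test on $\Omega^i$), and upgrade it to distinguishability via Proposition \ref{pr1} and the multinomial reduction of the basic technique. The only differences are cosmetic: your ``for all $n\ge i$'' definition of $\Theta_{1i}$ makes the sets nested by construction, so you bypass the paper's Lemma \ref{l33} (which the paper needs because its sets $\Theta'_{1i}$ are not nested and must be unioned up), and you spell out the concatenation argument for the sufficiency direction, which the paper leaves implicit.
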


The problems of distinguishability of  hypotheses with approaching sets of alternatives (see Ingster and Suslina \cite{is}, Comminges and  Dalalyan \cite{dal} and references therein) admits the interpretations in the famework of Theorem \ref{b2}. In these problems the sequence of distances assigning the approaching of alternatives allows to define consistent test. At the same time each alternative satisfies the distinguishability conditions.

\begin{thm} \label{b1} There is  a pointwise consistent  sequence of tests  iff there are nested subsets  $\Theta_{0i} \subseteq \Theta_{0,i+1}, \Theta_0 = \cup_{i=1}^\infty \Theta_{0i}$ and  $\Theta_{1i} \subseteq \Theta_{1,i+1}, \Theta_1 = \cup_{i=1}^\infty \Theta_{1i}$ such that,
for each $i$,  the hypothesis $H_{0i}: P \in\Theta_{0i}$ and the alternative $H_{1i}: P \in \Theta_{1i}$ are distinguishable.
\end{thm}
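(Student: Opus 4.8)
The plan is to establish both implications of Theorem \ref{b1}, following the scheme behind Theorem \ref{b2} but now exhausting both $\Theta_0$ and $\Theta_1$ by nested families. Throughout I would use that, for i.i.d.r.v.'s, weak distinguishability implies distinguishability (the reduction to the multinomial problem following Proposition \ref{pr1}), together with the observation that distinguishability supplies a \emph{uniformly} consistent sequence of tests.

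For sufficiency, suppose the families $\Theta_{0i}, \Theta_{1i}$ exist. For each $i$, distinguishability of $(\Theta_{0i},\Theta_{1i})$ yields a uniformly consistent sequence $K_n^{(i)}$, so $\sup_{P\in\Theta_{0i}}\alpha_P(K_n^{(i)}) + \sup_{Q\in\Theta_{1i}}\beta_Q(K_n^{(i)}) \to 0$ as $n\to\infty$. I would choose an increasing sequence $N_i$ for which both suprema are below $1/i$ whenever $n\ge N_i$, put $i(n)=\max\{i: N_i\le n\}$ (and a trivial test for $n<N_1$), and define $K_n:=K_n^{(i(n))}$. Since each $N_i$ is finite, $i(n)\to\infty$. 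Fixing $P\in\Theta_0$, nestedness gives $P\in\Theta_{0i}$ for all $i\ge i_0$, hence for all large $n$ one has $\alpha_P(K_n)\le \sup_{\Theta_{0,i(n)}}\alpha(K_n^{(i(n))})<1/i(n)\to 0$, and symmetrically $\beta_Q(K_n)\to 0$; this is pointwise consistency.

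For necessity, suppose $K_n$ is pointwise consistent, and set
\begin{equation*}
\Theta_{0i}=\{P\in\Theta_0:\alpha_P(K_n)\le 1/3\ \text{for all}\ n\ge i\},\qquad
\Theta_{1i}=\{Q\in\Theta_1:\beta_Q(K_n)\le 1/3\ \text{for all}\ n\ge i\}.
\end{equation*}
These are nested increasing, and because $\alpha_P(K_n)\to 0$ and $\beta_Q(K_n)\to 0$ for every $P$ and $Q$, each measure eventually enters, so $\cup_i\Theta_{0i}=\Theta_0$ and $\cup_i\Theta_{1i}=\Theta_1$. Now fix $i$ and examine the single test $K_i$: for $P\in\Theta_{0i}$ and $Q\in\Theta_{1i}$ the definitions give $\alpha_P(K_i)+\beta_Q(K_i)\le 2/3$, which is exactly the weak distinguishability inequality (\ref{e1}), with $\delta=1/3$, for the $i$-fold product laws $P^{\otimes i},Q^{\otimes i}$. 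Regrouping the i.i.d. sample into consecutive blocks of length $i$ turns $(\Theta_{0i},\Theta_{1i})$ into an i.i.d. testing problem for these block laws; weak distinguishability then implies distinguishability by the reduction following Proposition \ref{pr1}, so $(\Theta_{0i},\Theta_{1i})$ is distinguishable, completing the construction.

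The genuinely substantive point is the necessity step: converting the \emph{per-subset, single-sample-size} bound $\alpha_P(K_i)+\beta_Q(K_i)\le 2/3$ into a full uniformly consistent sequence. This is precisely where the i.i.d. structure is essential, and I would lean entirely on the established fact that weak distinguishability upgrades to distinguishability, applied at the level of the block laws $P^{\otimes i},Q^{\otimes i}$ (using $\lfloor m/i\rfloor$ blocks out of $m$ observations). The remaining care is bookkeeping in sufficiency, namely tying the diagonal index $i(n)$ to the thresholds $N_i$ so that $i(n)\to\infty$ slowly enough that the level-$i(n)$ error bounds still vanish; everything else is routine verification of nestedness and exhaustion.
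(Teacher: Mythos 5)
Your proof is correct, and its core engine is the same as the paper's: weak distinguishability (a single test whose error sum is uniformly below one, here at the level of the block laws $P^{\otimes i}$) upgrades to distinguishability via Proposition \ref{pr1} and the multinomial reduction of Hoeffding--Wolfowitz. Where you genuinely differ is in how the nested subsets are produced. The paper proves Theorem \ref{b2} by defining, for each fixed test $K_i$, the set $\Theta'_{1i}$ of alternatives that $K_i$ handles well, and then takes finite unions $\cup_{j\le i}\Theta'_{1j}$, invoking Lemma \ref{l33} to preserve distinguishability under unions; for Theorem \ref{b1} it merely says the proof is ``similar''. Your tail-set definition $\Theta_{0i}=\{P\in\Theta_0:\alpha_P(K_n)\le 1/3\ \text{for all}\ n\ge i\}$ (and likewise for $\Theta_{1i}$) is a better decomposition for this two-sided statement: the sets are nested automatically, and, more importantly, the definition couples the two sides so that the \emph{single} test $K_i$ controls both error probabilities for every pair in $\Theta_{0i}\times\Theta_{1i}$. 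A naive transcription of the Theorem \ref{b2} argument --- per-test sets $\Theta'_{0j}=\{P:\alpha_P(K_j)\le\alpha\}$, $\Theta'_{1k}=\{Q:\beta_Q(K_k)\le\beta\}$ plus a two-sided union lemma --- would require distinguishability of the mismatched pairs $(\Theta'_{0j},\Theta'_{1k})$, $j\ne k$, which does not follow from pointwise consistency: taking $K_1\equiv 0$ and $K_2\equiv 1$ (trivial initial tests are compatible with pointwise consistency) gives $\Theta'_{01}=\Theta_0$ and $\Theta'_{12}=\Theta_1$, which may well be indistinguishable. Your definition sidesteps this entirely and dispenses with Lemma \ref{l33}. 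You also write out the sufficiency direction (the diagonal scheme $K_n=K_n^{(i(n))}$ with thresholds $N_i$), which the paper leaves implicit; the only point of care there is to choose the $N_i$ strictly increasing, so that $i(n)=\max\{i:N_i\le n\}$ is finite for each $n$ and tends to infinity, exactly as you indicate.
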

\begin{proof}[ Proof of Theorem \ref{b2}] Let $K_i, 1 \le i < \infty,$ be a consistent  sequence of tests. Let $0  < \alpha,\beta < 1$ be such that $ \alpha + \beta < 1$. For each $i$ define the subsets
$\Theta'_{1i} = \{P: \alpha(K_i) < \alpha, \beta_P(K_i) \le \beta,  P \in \Theta_1\}$. The sets $\Theta_0$ and $\Theta'_{1i}$ are weakly distinguishable  and therefore they are distinguishable.
 Hence the hypothesis $H_0$ and the alternative $H_{1i}: P \in \Theta_{1i} = \cup_{j=1}^i \Theta'_{1j}$ are  distinguishable by Lemma \ref{l33} given below.

\begin{lemma}\label{l33} Let a hypothesis $H_0: P \in \Theta_0$ be distinguishable for alternatives
$H_{11}: P \in \Theta_{11}$ and  $H_{12}: P \in \Theta_{12}$. Then the hypothesis $H_0: P \in \Theta_0$ is  distinguishable for the alternative
$H_{1}: P \in \Theta_{11}\cup\Theta_{12}$.
\end{lemma}
\end{proof}

The proof of Theorem \ref{b1} is similar.

  \begin{thm}\label{b3} If there is a pointwise consistent sequence of tests, then there is a discernible  sequence of tests.
\end{thm}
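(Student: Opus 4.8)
The plan is to combine Theorem \ref{b1} with the exponential error bound (\ref{x1}) and then to splice the resulting tests together by a diagonal Borel--Cantelli argument. First I would apply Theorem \ref{b1}: since a pointwise consistent sequence exists, there are nested exhaustions $\Theta_0 = \cup_{i=1}^\infty \Theta_{0i}$ and $\Theta_1 = \cup_{i=1}^\infty \Theta_{1i}$ such that, for every $i$, the hypothesis $H_{0i}$ and the alternative $H_{1i}$ are distinguishable. For each fixed $i$, distinguishability together with the reduction to the multinomial problem yields, via (\ref{x1}), a \emph{non-randomized} test sequence $K_n^{(i)}$ and a constant $n_{0i}$ with
$$
\sup_{P \in \Theta_{0i}} \alpha_P(K_n^{(i)}) \le e^{-c_i n}, \qquad \sup_{Q \in \Theta_{1i}} \beta_Q(K_n^{(i)}) \le e^{-c_i n}, \qquad n > n_{0i}.
$$
Taking the tests non-randomized (the likelihood ratio tests in the multinomial reduction may be so chosen) guarantees that $\{K_n^{(i)} = 1\}$ is a genuine event and $\alpha_P(K_n^{(i)}) = P(K_n^{(i)} = 1)$.

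Next I would diagonalize. Choose an increasing sequence $N_1 < N_2 < \cdots$ with $N_i > n_{0i}$ and $e^{-c_i N_i}/(1 - e^{-c_i}) \le 2^{-i}$, set $m(n) = i$ for $N_i \le n < N_{i+1}$ (and $m(n)=1$ for $n < N_1$), and define the composite test $K_n = K_n^{(m(n))}$, which is again a non-randomized test of $X_1,\ldots,X_n$. By construction $m(n) \to \infty$. Now fix $P \in \Theta_0$; by nestedness there is $i_0$ with $P \in \Theta_{0i}$ for all $i \ge i_0$, so for $n \ge N_{i_0}$ we have $m(n) \ge i_0$, hence $P \in \Theta_{0,m(n)}$ and $\alpha_P(K_n) = P(K_n = 1) \le e^{-c_{m(n)} n}$ with $n > n_{0,m(n)}$. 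Summing over the blocks $N_i \le n < N_{i+1}$ and using the geometric tails,
$$
\sum_{n=1}^\infty P(K_n = 1) \le N_{i_0} + \sum_{i=i_0}^\infty \frac{e^{-c_i N_i}}{1 - e^{-c_i}} \le N_{i_0} + \sum_{i=i_0}^\infty 2^{-i} < \infty,
$$
so the Borel--Cantelli lemma gives $P(K_n = 1 \text{ for only finitely many } n) = 1$, that is (\ref{m5}). Running the identical argument with $\beta_Q(K_n) = Q(K_n = 0)$ for $Q \in \Theta_1$ yields (\ref{m6}), and $K_n$ is discernible.

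I expect the main obstacle to lie in the balancing carried out in the diagonal step. Because larger classes are harder to separate, the rates $c_i$ will in general shrink (possibly $c_i \to 0$) and the thresholds $n_{0i}$ will grow, so $m(n)$ must increase slowly enough to keep the error bounds summable yet fast enough that $m(n)\to\infty$ eventually captures every $P$. What makes the balance feasible is precisely the \emph{geometric} (in $n$) nature of the bound $e^{-c_i n}$: for each fixed $i$ the tail $\sum_{n \ge N_i} e^{-c_i n}$ is controlled by its leading term, which lets me push $N_i$ out far enough to beat $2^{-i}$ regardless of how small $c_i$ is. A secondary point requiring care is to confirm that the exponential estimate is applied only on the range $n > n_{0,m(n)}$, which the condition $N_i > n_{0i}$ secures on each block.
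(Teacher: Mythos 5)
Your proposal is correct and follows essentially the same route as the paper's own proof: invoke Theorem \ref{b1} to obtain the nested distinguishable exhaustion, upgrade each pair to tests with exponential error decay via (\ref{x1}), splice these tests in blocks whose starting indices exceed both $n_{0i}$ and the point where the geometric tail becomes summably small, and conclude by Borel--Cantelli. The only cosmetic differences are your summable bound $2^{-i}$ in place of the paper's $i^{-2}$ and your explicit remark about choosing non-randomized tests.
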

The exponential decay of type I and type II error probabilities (see (\ref{x1})) allows to prove uniform discernibility of distinguishable sets of hypotheses and alternatives.

We say that  a sequence of tests $K_n$ is uniformly discernible if
\begin{equation}\label{eq12}
\lim_{k\to\infty}\sup_{P \in \Theta_0} P(K_n =1 \,\,\,\mbox{for some}\,\,\, n> k) = 0
\end{equation}
and
\begin{equation*}
\lim_{k\to\infty}\sup_{P \in \Theta_1} P(K_n =0\,\,\, \mbox{for some}\,\,\, n> k) = 0.
\end{equation*}
If there is uniformly discernible  sequence of tests, we  say that the sets of hypotheses and alternatives are uniformly discernible.
\begin{thm}\label{b4} If  hypothesis $H_0$ and  alternative $H_1$ are distinguishable, then they are uniformly discernible.

For a sequence of tests $K_n$ satisfying (\ref{x1}), there are positive constants $c$ and $C$ such that
\begin{equation}\label{x2}
\sup_{P \in \Theta_0} P(K_n =1\,\,\, \mbox{for some}\,\,\, n> k) \le C\exp\{-ck\}
\end{equation}
and
\begin{equation}\label{x3}
\sup_{P \in \Theta_1} P(K_n =0\,\,\, \mbox{for some}\,\,\, n> k) \le C\exp\{-ck\}.
\end{equation}
\end{thm}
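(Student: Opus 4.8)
The plan is to establish the quantitative bounds (\ref{x2}) and (\ref{x3}) first and then read off uniform discernibility by letting $k\to\infty$. Since $H_0$ and $H_1$ are distinguishable, the discussion culminating in (\ref{x1}) supplies a single sequence of nonrandomized tests $K_n$ and a constant $n_0$ with $\alpha(K_n)\le \exp\{-cn\}$ and $\beta(K_n)\le \exp\{-cn\}$ for all $n>n_0$. These are exactly the tests I would use to witness uniform discernibility, so the whole statement reduces to turning the pointwise-in-$n$ exponential bound into a bound on the tail event.

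The main step is a union bound. For $P\in\Theta_0$ the event in (\ref{x2}) is the countable union $\bigcup_{n>k}\{K_n=1\}$, and the marginal probability of $\{K_n=1\}$ under the infinite product measure equals the type I error $\alpha_P(K_n)$ of the $n$-sample test. Countable subadditivity therefore yields
$$P(K_n=1\ \mbox{for some}\ n>k)\le \sum_{n>k}\alpha_P(K_n)\le \sum_{n>k}\alpha(K_n).$$
For $k\ge n_0$ the right-hand side is dominated by the geometric tail $\sum_{n>k}\exp\{-cn\}=\exp\{-c(k+1)\}/(1-\exp\{-c\})$, which is of the form $C_1\exp\{-ck\}$. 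The identical computation with $\beta_P(K_n)$ in place of $\alpha_P(K_n)$ handles $P\in\Theta_1$ and the event $\{K_n=0\ \mbox{for some}\ n>k\}$, giving (\ref{x3}).

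The only bookkeeping issue is that (\ref{x1}) holds merely for $n>n_0$, so the geometric estimate applies only once $k\ge n_0$. For the finitely many values $k\le n_0$ I would use the trivial bound $P(\cdots)\le 1\le \exp\{cn_0\}\exp\{-ck\}$ and then take $C=\max(C_1,\exp\{cn_0\})$; with this choice both (\ref{x2}) and (\ref{x3}) hold for every $k$. Letting $k\to\infty$ in these two inequalities forces the suprema in (\ref{eq12}) and its companion to tend to $0$, which is precisely uniform discernibility, proving the first assertion.

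I do not anticipate a genuine obstacle: the content is entirely contained in (\ref{x1}), and the argument is a uniform Borel--Cantelli type estimate. The single point requiring care is that the exponential decay in (\ref{x1}) is \emph{uniform} over $\Theta_0$ and over $\Theta_1$ (it originates from the reduction to a fixed multinomial testing problem, where the likelihood ratio tests have error probabilities decaying at a rate independent of the particular measure). Consequently the union bound and the resulting tail estimate are themselves uniform in $P$, and this uniformity is exactly what upgrades ordinary discernibility to the uniform version in (\ref{eq12}).
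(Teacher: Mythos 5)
Your proposal is correct and follows essentially the same route as the paper's proof: a union bound over the events $\{K_n=1\}$ (respectively $\{K_n=0\}$) for $n>k$, followed by summation of the geometric tail $\sum_{n>k}\exp\{-cn\}\le C\exp\{-ck\}$ supplied by (\ref{x1}), with uniform discernibility obtained by letting $k\to\infty$. Your extra bookkeeping for $k\le n_0$ and the explicit remark on the uniformity in $P$ are points the paper absorbs silently into the constants, but the argument is the same.
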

\begin{thm}\label{b4a} If there is  a consistent sequence of tests,  then there is a discernible sequence of tests satisfying (\ref{eq12}). \end{thm}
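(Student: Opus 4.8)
The plan is to reduce to the distinguishable case by Theorem~\ref{b2} and then splice the uniformly discernible test sequences provided by Theorem~\ref{b4} into a single sequence, letting the ``level'' grow slowly enough that the type~I tails stay summable uniformly over $\Theta_0$.

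First I would invoke Theorem~\ref{b2}: since a consistent sequence of tests exists, there are nested subsets $\Theta_{1i} \subseteq \Theta_{1,i+1}$ with $\Theta_1 = \cup_{i=1}^\infty \Theta_{1i}$ such that, for every $i$, the hypothesis $H_0\colon P \in \Theta_0$ and the alternative $H_{1i}\colon P \in \Theta_{1i}$ are distinguishable. Fixing $i$, the argument of subsection~\ref{ss23} (see (\ref{x1})) yields a test sequence $K^{(i)}_n$ with exponentially decaying errors, and Theorem~\ref{b4} then supplies constants $c_i, C_i > 0$ with
\[
\sup_{P\in\Theta_0} P(K^{(i)}_n = 1 \mbox{ for some } n > k) \le C_i \exp\{-c_i k\}, \qquad
\sup_{P\in\Theta_{1i}} P(K^{(i)}_n = 0 \mbox{ for some } n > k) \le C_i \exp\{-c_i k\}.
\]

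Next I would merge these. Choose a strictly increasing sequence $N_1 < N_2 < \cdots$, with each $N_i$ exceeding the threshold $n_0$ attached to $K^{(i)}$, large enough that $C_i \exp\{-c_i N_i\} \le 2^{-i}$; this is possible however unfavourable the constants $c_i, C_i$ may be. Define the single sequence $\tilde K_n := K^{(i)}_n$ whenever $N_i < n \le N_{i+1}$, so that on the $i$-th block of indices one runs the level-$i$ test and, crucially, the level-$i$ test is ever evaluated only at sample sizes $n > N_i$. For the $\Theta_0$ requirement, given $k$ pick $i_0$ with $N_{i_0} \le k$; every index $n > k$ falls in a block of level $\ge i_0$, so a union bound over blocks together with the first tail estimate (applied at $k = N_i$) gives $\sup_{P\in\Theta_0} P(\tilde K_n = 1 \mbox{ for some } n > k) \le \sum_{i\ge i_0} C_i \exp\{-c_i N_i\} \le \sum_{i\ge i_0} 2^{-i} = 2^{-i_0+1} \to 0$ as $k \to \infty$, which is exactly (\ref{eq12}) and a fortiori (\ref{m5}). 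For the $\Theta_1$ requirement, fix $P \in \Theta_1$, pick $i_1$ with $P \in \Theta_{1i_1}$, so that $P \in \Theta_{1i}$ for all $i \ge i_1$ by nestedness; the same block-wise union bound with the second estimate gives $P(\tilde K_n = 0 \mbox{ for some } n > k) \le \sum_{i\ge i_0} 2^{-i} \to 0$ once $k \ge N_{i_1}$, and since these events decrease to $\{\tilde K_n = 0 \mbox{ infinitely often}\}$ their probability is $0$, which is (\ref{m6}).

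The hard part is the combination step, i.e.\ controlling the infinite union over blocks; it is handled by growing the level so slowly (through the cutoffs $N_i$) that the block tails form a convergent geometric series. I would also emphasize the \emph{asymmetry} that makes the statement natural: because the consistent case leaves $\Theta_0$ undecomposed, the $\Theta_0$ bound is uniform and yields (\ref{eq12}), whereas on $\Theta_1$ the index $i_1$ depends on $P$, so only the pointwise discernibility (\ref{m6}) is obtained.
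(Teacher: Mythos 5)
Your proof is correct and follows essentially the same route as the paper: decompose $\Theta_1$ via Theorem~\ref{b2} (with the null side kept whole), splice the exponentially good tests for each pair $(\Theta_0,\Theta_{1i})$ into one sequence with slowly growing level, and control the tails by a summable (geometric) series, yielding (\ref{eq12}) uniformly on $\Theta_0$ and (\ref{m6}) pointwise on $\Theta_1$. The only cosmetic difference is that you route the tail estimates through Theorem~\ref{b4}, whereas the paper reuses the spliced sequence from the proof of Theorem~\ref{b3} with $\Theta_{0i}=\Theta_0$ and concludes via summability of $\alpha(K_n)$ and Borel--Cantelli; these are the same computation.
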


 We say that a sequence of tests is strongly consistent if this sequence of tests is discernible and (\ref{eq12}) holds.

\noindent{\sl Remark.} Pfanzagl (Theorem 2.1, \cite{pf}) has established that, if there is uniformly consistent estimator, then there is strongly uniformly consistent estimator. This implies that the existence of uniformly discernible tests follows from the existence of uniformly consistent tests. In Theorem \ref{b4} additional estimators of rates of convergence are provided.
Pfanzagl (Theorem 2.2, \cite{pf}) has proved also that, if there is consistent estimator then there is strongly consistent estimator a.e. for any a priori Bayes measure on the set of parameters. Hence similar statement for the problem of existence of discernible tests follows. Theorem \ref{b3} contains the stronger result.
\section{\bf  Hypothesis testing on a probability  measure of independent sample.\label{s3}}
Different approaches are implemented to the study of existence of consistent, uniformly consistent and discernible tests, among them,

implementation of estimators as test statistics,

distance method,

convergence of probability measures in weak topologies generated continuous or measurable functions.

As mentioned the implementation of estimators as test statistics will be treated in the frameworks of distance method.
\subsection{Weak topologies}
Let $\Psi$ be a set of measurable functions $f:\Omega \to R^1$. The coarsest topology in $\Lambda$ providing the continuous mapping
$$
P \to \int_\Omega f \, dP, \quad P \in \Lambda
$$
for all $f \in \Psi$ is called the $\tau_\Psi$-topology of weak convergence.

If $\Psi$ is the set of all bounded continuous functions, the $\tau_\Psi$ - topology is   the weak topology.
If $\Psi$ is the set of  indicator functions of all measurable Borel sets, the $\tau_\Psi$ - topology is called the $\tau$-topology (see Groeneboom,  Oosterhoff and Ruymgaart \cite{gor}) or the topology of setwise convergence on all Borel sets
  (see Ganssler \cite{ga} and Bogachev \cite{bo}). In the papers the $\tau$-topology is often replaced with the $\tau_\Phi$-topology with the set $\Phi$ of all bounded measurable functions. All results given below coincide for these topologies.

 For any set $A \subset \Lambda$ denote $\frak{cl}_{\tau_w}(A)$, $\frak{cl}_{\tau}(A)$  and $\frak{cl}_{\tau_\Phi}(A)$ the closures of $A$ in weak, $\tau$ and $\tau_\Phi$-topologies respectively. We shall write $\tau_\Psi$ if the weak topology may be chosen arbitrary: weak, $\tau$ or $\tau_\Phi$.

 Theorem \ref{b7} given below is a version of Le Cam and Schwartz Theorem (see \cite{les}, p.141).

 \begin{thm} \label{b7} {\sl i.} Let the set $\Theta_0$ be relatively compact in the $\tau_\Psi$ - topology. Then the hypothesis $H_0$ and the alternative $H_1$ are distinguishable if the closures of sets $\Theta_0$ and $\Theta_1$ are disjoint in the $\tau_\Psi$-topology.

{\sl ii.}  If $\Theta_0$  and $\Theta_1$ are  relatively compact in the $\tau$ - topology, then the converse holds if the closures of $\Theta_0$ and $\Theta_1$ in the $\tau$-topology are disjoint.
\end{thm}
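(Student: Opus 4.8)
The plan is to deduce both directions from Proposition \ref{pr1}, i.e.\ to reduce distinguishability to the existence of a finite Borel partition $A_1,\dots,A_k$ of $\Omega$ for which the image sets
$$V_0=\{(P(A_1),\dots,P(A_k)):P\in\Theta_0\},\qquad V_1=\{(Q(A_1),\dots,Q(A_k)):Q\in\Theta_1\}$$
have disjoint closures in $R^k$. For a fixed partition the map $\pi(P)=(P(A_1),\dots,P(A_k))$ is continuous for the $\tau$-topology, and convergence of a net $P_\gamma\to R$ in any of the weak topologies $\tau_\Psi$ forces $\int g\,dP_\gamma\to\int g\,dR$ for $g\in\Psi$; these two facts are the bridge between the $\tau_\Psi$-closures of $\Theta_0,\Theta_1$ and the Euclidean closures of $V_0,V_1$.

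For part {\sl i.} I would first use that the $\tau_\Psi$-closure $C_0$ of $\Theta_0$ is compact and disjoint from the closed set $C_1$, the $\tau_\Psi$-closure of $\Theta_1$. For each $P_0\in C_0$ pick a basic $\tau_\Psi$-neighbourhood, determined by finitely many functions of $\Psi$ and a tolerance, that misses $C_1$; by compactness finitely many such neighbourhoods cover $C_0$, and collecting the finitely many functions involved gives $g_1,\dots,g_L\in\Psi$ and the $\tau_\Psi$-continuous map $T(P)=(\int g_1\,dP,\dots,\int g_L\,dP)$. The covering shows $T(C_0)$ lies in an open set $G\subset R^L$ while $T(C_1)\subset R^L\setminus G$, so the compact set $T(C_0)$ and the Euclidean closure of $T(\Theta_1)$ are a positive distance $\delta$ apart. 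Since each $g_\ell$ is bounded, I would then cut $\Omega$ into the finite partition obtained from the inverse images under $(g_1,\dots,g_L)$ of a grid of small cubes in $R^L$; writing $(\int g_\ell\,dP)_\ell$ as an approximately linear function of $\pi(P)$ with error controlled by the cube size, a gap argument shows that for a fine enough grid the closures of $V_0$ and $V_1$ are disjoint. Proposition \ref{pr1} then yields weak distinguishability, hence distinguishability. For the $\tau$-topology the $g_\ell$ are indicators and the partition they generate makes the reduction exact, bypassing the discretisation.

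For part {\sl ii.} I would argue by contraposition. Granting the converse half of the Le Cam--Schwartz reduction -- that under $\tau$-compactness of both families distinguishability produces a single finite partition with the closures of $V_0$ and $V_1$ disjoint -- I assume a common point $R$ in the $\tau$-closures of $\Theta_0$ and $\Theta_1$ and derive a contradiction: a net in $\Theta_0$ converging to $R$ in $\tau$ gives $P_\gamma(A_i)\to R(A_i)$, so $(R(A_1),\dots,R(A_k))$ lies in the closure of $V_0$, and symmetrically in the closure of $V_1$, contradicting their disjointness. Hence the $\tau$-closures of $\Theta_0$ and $\Theta_1$ are disjoint.

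The main obstacle is exactly the converse half invoked in part {\sl ii.}: passing from a uniformly consistent sequence of tests on the product experiments $\Omega^n$ to a single finite partition of $\Omega$ separating $\Theta_0$ from $\Theta_1$. A uniformly consistent test lives on $\Omega^n$ for large $n$, so the naive reduction only separates the product families on $\Omega^n$; to descend to a partition of $\Omega$ one needs the full strength of the $\tau$-compactness of \emph{both} sets together with the control of the total variation of the product mixtures $\int P^{\otimes n}\,d\mu_0(P)$ and $\int Q^{\otimes n}\,d\mu_1(Q)$ that underlies the original Le Cam--Schwartz theorem. This is the step I would spend the most care on, and where I would appeal to \cite{les} rather than reprove it from scratch; part {\sl i.}, by contrast, is entirely self-contained through Proposition \ref{pr1}.
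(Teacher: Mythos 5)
Your proposal is correct in substance and, at the top level, follows the same two-pronged strategy as the paper (a compactness covering argument for sufficiency, contraposition plus an appeal to Le Cam--Schwartz for necessity), but both halves are executed differently, and the difference is instructive. For part {\sl i.} the paper does not route through Proposition \ref{pr1} at all: following Dembo and Peres, once the covering argument produces finitely many separating functions $\phi_1,\dots,\phi_k\in\Psi$, it defines the uniformly consistent tests directly from the empirical means (relative frequencies in the $\tau$-case) of those functions. Your extra discretisation step --- cutting $\Omega$ into preimages of a fine grid under $(g_1,\dots,g_L)$ so as to land in the multinomial reduction of Proposition \ref{pr1} --- is a valid alternative (it uses only boundedness of the generating functions, which holds for all three topologies covered by $\tau_\Psi$), just one reduction longer. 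The more substantial divergence is in part {\sl ii.} You black-box ``the converse half of the Le Cam--Schwartz reduction,'' i.e.\ the descent from a separating partition of $\Omega^n$ to a separating partition of $\Omega$, and you guess that its engine is control of the total variation of product mixtures; that mixture machinery actually belongs to the indistinguishability bound of Theorem \ref{t5} (Kraft), not to the step you need. The paper's proof avoids the descent altogether: it cites only Proposition 1 of Le Cam and Schwartz --- on relatively compact sets the map $P\mapsto P\otimes P$ is continuous in the $\tau_\Phi$-topology --- and uses it in the opposite direction, lifting a putative common limit point $P$ of $\Theta_0$ and $\Theta_1$ to a common limit point $P^{\otimes d}$ of the product families, which by Proposition \ref{pr1} blocks every separating partition of every $\Omega^d$ at once. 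This is more economical (a single elementary lemma rather than the full theorem), and it isolates exactly why relative compactness of \emph{both} sets is assumed: as the paper notes after Theorem \ref{seq1}, by an example of Peres the product map is not $\tau$-continuous on all of $\Lambda$, so the lifting step is precisely where compactness enters. Your version is not wrong, but a reader following it must take on faith a statement that is nearly the whole of part {\sl ii.}, whereas the paper's argument reduces that step to a sharply delimited and independently citable fact.
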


{\sl Example} 2.1. Let $\nu$ be   Lebesgue measure in $(0,1)$ and let we wish to test a hypothesis $H_0 : f(x) =1,\, x \in (0,1)$  on a density $f = \frac{dP}{d\nu}$. The alternative is $H_1: f  =f_i(x) = 1 + \sin(2\pi i x),  1 \le i < \infty$.

 For any measurable set $B \in \frak{B}$ we have
  $$
 \lim_{i\to\infty} \int_B f_i(x) \, dx = \int_B dx.
 $$
 Therefore the hypothesis $H_0$ and the alternative $H_1$ are indistinguishable.

\begin{proof}[ Proof of   Theorem \ref{b7} {\sl ii}]  The partition $\{A_1,\ldots,A_k\}$ satisfying Proposition \ref{pr1} may exist only for the set $\Omega^d$ with $d>1$. This is the main problem in the proof of distinguishability. If $\Theta$ is relatively compact, the map $\Theta \to \Theta \otimes \Theta$ is continuous in  the $\tau_\Phi$- topology (Proposition 1, Le Cam and Schwartz \cite{les} ). Therefore the existence of the partition for $\Omega^d$ implies the existence of such a partition for $\Omega$.\end{proof}
The proof of  Theorem \ref{b7} {\sl i.} is akin to the proof of  Theorem 2 {\sl i.} in Dembo and Peres \cite{dep}. For the $\tau$-topology the test statistics are easily defined on the base of relative frequencies  of events $A_1,\ldots,A_k$. For other topologies the indicator functions are replaced with  functions $\phi_1,\ldots,\phi_k \in \Psi$.

  Le Cam and Schwartz \cite{les}, p.141, provided the necessary and sufficient conditions of distinguishability in the following form.
\begin{thm}\label{b8} Let sets $A \subset \Lambda$  and $B \subset \Lambda$ be relatively compact in the $\tau_\Phi$-topology. Then the set $A$ of hypotheses and the set $B$ of alternatives are distinguishable iff "there is a finite family $\{f_j, j=1,\ldots,m\}$ of measurable bounded functions on" $\Omega$ "such that
\begin{equation}\label{x11}
\sup\left| \int f_j \, dP - \int f_j dQ\right| < 1
\end{equation}
implies that either both $P$ and $Q$ are elements of $A$ or both are elements of $B$".
\end{thm}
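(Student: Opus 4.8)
The plan is to derive the theorem from Theorem \ref{b7} by showing that the finite-family separation condition is merely a concrete restatement of the disjointness of $\tau_\Phi$-closures, the bridge being a standard functional-analytic argument: two disjoint compact sets, in a topology generated by a family of continuous functionals, can always be separated by finitely many of those functionals.

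For sufficiency, suppose the finite family $\{f_1,\ldots,f_m\}$ exists, write $T(P) = (\int f_1\,dP,\ldots,\int f_m\,dP) \in R^m$, and set $\Psi = \{f_1,\ldots,f_m\}$. Since each $f_j$ is bounded, $T(A)$ is a bounded subset of $R^m$, so $A$ is relatively compact in the $\tau_\Psi$-topology. Reading the hypothesis with $P \in A$, $Q \in B$, the only forbidden configuration is that $P$ and $Q$ lie on opposite sides; hence $\sup_j|\int f_j\,dP - \int f_j\,dQ| \ge 1$ for every such pair, so $T(A)$ and $T(B)$ are at $\ell_\infty$-distance at least $1$ and the $\tau_\Psi$-closures of $A$ and $B$ are disjoint. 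Theorem \ref{b7} {\sl i.} then yields a uniformly consistent test. Concretely one may build the test from the empirical averages $n^{-1}\sum_l f_j(X_l)$, whose uniform exponential concentration around $\int f_j\,dP$ follows from Hoeffding's inequality because the $f_j$ are bounded.

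For necessity, assume $A$ and $B$ are distinguishable. By Theorem \ref{b7} {\sl ii.} (using that the $\tau$- and $\tau_\Phi$-formulations coincide) the closures $\frak{cl}_{\tau_\Phi}(A)$ and $\frak{cl}_{\tau_\Phi}(B)$ are disjoint, and being relatively compact they are compact. The $\tau_\Phi$-topology is the initial topology of the maps $L_f : P \mapsto \int f\,dP$, $f \in \Phi$, and these maps separate points of $\Lambda$. Fix $(P,Q)$ in the compact product $\frak{cl}_{\tau_\Phi}(A) \times \frak{cl}_{\tau_\Phi}(B)$; since $P \ne Q$ there is $f \in \Phi$ with $c := |\int f\,dP - \int f\,dQ| > 0$, and replacing $f$ by $(2/c)f$ we may assume this difference equals $2$. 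The set $W_{P,Q} = \{(P',Q') : |\int f\,dP' - \int f\,dQ'| > 1\}$ is open in the product topology and contains $(P,Q)$. These sets cover the compact product, so finitely many suffice; the associated functions $f_1,\ldots,f_m$ are bounded and measurable, and for every $P \in A$, $Q \in B$ one has $\sup_j|\int f_j\,dP - \int f_j\,dQ| > 1$. Therefore $\sup_j|\int f_j\,dP - \int f_j\,dQ| < 1$ excludes the mixed case, so for $P,Q \in A \cup B$ the two measures lie on the same side, which is exactly (\ref{x11}).

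The main obstacle is the necessity direction, and within it the passage from ``disjoint closures'' to a single \emph{finite} separating family: this is precisely where compactness is indispensable, since a pointwise choice of separating functional must be upgraded, through the finite-subcover extraction on $\frak{cl}_{\tau_\Phi}(A) \times \frak{cl}_{\tau_\Phi}(B)$, to a uniform separation by boundedly many functionals. The rescaling that turns the bare inequality $\int f\,dP \ne \int f\,dQ$ into the gap $>1$ must be tracked so that the threshold in (\ref{x11}) comes out as the stated constant, and one must keep in view the convention that the $\tau$- and $\tau_\Phi$-statements agree, so that Theorem \ref{b7} {\sl ii.} applies in the $\tau_\Phi$-setting.
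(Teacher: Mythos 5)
Your proposal is essentially sound, but you should know how it sits relative to the paper: the paper contains \emph{no} proof of Theorem \ref{b8}. The theorem is quoted verbatim from Le Cam and Schwartz, and the paper's only commentary is the one-sentence remark after it (the scaled indicator functions of the partition from Proposition \ref{pr1} supply a family satisfying (\ref{x11})), together with the introductory claim that Theorem \ref{b7} ``straightforwardly follows'' from Theorem \ref{b8} by well-known functional analysis. You run that dependence in reverse, deriving \ref{b8} from \ref{b7}. This is legitimate and non-circular, because the paper sketches a proof of Theorem \ref{b7} that does not pass through \ref{b8} (Proposition \ref{pr1} plus Le Cam--Schwartz's continuity of $P \mapsto P\otimes P$ on relatively compact sets). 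Your necessity argument --- point separation by bounded measurable functions (valid since the closures are disjoint, hence $P \ne Q$ on the product), rescaling to gap $2$, and a finite subcover of the compact product $\frak{cl}_{\tau_\Phi}(A)\times\frak{cl}_{\tau_\Phi}(B)$ --- is precisely the functional-analytic bridge the paper alludes to but never writes down; the paper's implicit route instead extracts the finite family from a single partition, so its $f_j$ are scaled indicators, whereas yours are one rescaled functional per element of the subcover. Both exploit compactness at the same point: upgrading pointwise separation to separation by finitely many functionals.

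One genuine flaw to repair in the sufficiency direction: your claim that boundedness of $T(A)\subset R^m$ makes $A$ relatively compact in the $\tau_\Psi$-topology is false in general. In the initial topology of the single map $T$, the closure of $A$ is $T^{-1}\bigl(\overline{T(A)}\bigr)$, and since $T(\Lambda)$ need not be closed in $R^m$, the set $\overline{T(A)}\cap T(\Lambda)$ need not be compact, so this preimage can fail to be compact (take $\Omega=R^1$, $m=1$, $f_1$ with range $(-1,1)$, and $A$ the point masses $\delta_x$, $x\ge 0$). Hence you cannot literally invoke Theorem \ref{b7} {\sl i.} via that claim. This does not damage the proof, because compactness in Theorem \ref{b7} {\sl i.} serves only to produce a finite separating family, which here is given: your parenthetical construction --- the empirical averages $n^{-1}\sum_{l} f_j(X_l)$, Hoeffding's inequality for the bounded $f_j$, and rejection when the vector of averages is $\ell_\infty$-closer to $T(B)$ than to $T(A)$ (the two images being at $\ell_\infty$-distance at least $1$) --- is complete and should be promoted from a remark to the actual argument. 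Finally, note that the statement tacitly assumes $A\cap B=\emptyset$ (a common point kills distinguishability while (\ref{x11}) can still hold vacuously); your reading of ``mixed pairs'' silently uses this, which is also how the paper and Le Cam--Schwartz intend it.
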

If we define functions $f_j, 1 \le j \le m=k$ as  the indicator functions of the sets of the partition $\{A_1,\ldots,A_k\}$  multiplied on some constants, we get that  (\ref{x11}) holds if the closures of $A$ and $B$ are disjoint.

If $\Omega$ is a metric space,  then the weak topology and the $\tau$ - topology coincide on compacts in the $\tau$-topology ( Ganssler \cite{ga}, Lemma 2.3).

{\sl Remark}. Hoefding and  Wolfowitz \cite{ho} have obtained the necessary and sufficient conditions of distinquishability in another form. The necessary distinguishability conditions  were proposed in terms of variational metric on the set of probability measures. The probability measures are supposed to satisfy  some assumptions  on  finiteness of number of intervals of monotonicity of differences of distribution functions of hypotheses and alternatives.

The  results on consistency and pointwise consistency will be provided in terms of $F_\sigma$-sets and  $\sigma$-compact sets.
Set $\Psi$ is called $F_\sigma$-set if $\Psi$ is countable union of closed sets.
Set  $\Psi$ is called $\sigma$-compact   if $\Psi$ is countable union of compacts.

{\it Example}. Let probability measures of  set $\Psi \subset \Lambda$ be absolutely continuous for probability measure $\mu$. Suppose that, for each measure $P \in \Psi$,  there is some $p>1$ such that $dP/d\mu \in L_p(d\mu)$. Then $\Psi$ is $\sigma$-compact set in the $\tau$-topology.

The other examples of $\sigma$-compact sets can be provided using Orlich spaces.

Implementing Theorems \ref{b2} - \ref{b3}, we can easily derive from Theorem \ref{b7}   necessary and sufficient  conditions for existence of consistent and pointwise consistent tests.
\begin{thm}\label{b9} \sl{i.} Let $\Theta_0$ be relatively compact in the $\tau_\Psi$-topology. Then there are consistent  tests if $\Theta_0$ and $\Theta_1$  are contained respectively in disjoint closed set and $F_\sigma$-set in the $\tau_\Psi$-topology.

{\sl ii.} For the $\tau$-topology, the converse holds if we suppose additionally that $\Theta_1$ is contained in  some $\sigma$-compact set.
\end{thm}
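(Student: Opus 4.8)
The plan is to derive both directions by combining the reduction of consistency to nested distinguishable families established in Theorem~\ref{b2} with the two halves of the Le Cam--Schwartz criterion of Theorem~\ref{b7}: the $F_\sigma$/$\sigma$-compact structure supplies exactly the countable exhaustion that Theorem~\ref{b2} asks for, while each individual distinguishability question is settled by Theorem~\ref{b7}.

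For part {\sl i.}, let $F_0$ be a $\tau_\Psi$-closed set with $\Theta_0 \subseteq F_0$ and let $F_1 = \cup_{i=1}^\infty C_i$ be an $F_\sigma$-set with $\Theta_1 \subseteq F_1$, each $C_i$ closed in the $\tau_\Psi$-topology and $F_0 \cap F_1 = \emptyset$. Replacing $C_i$ by $\cup_{j\le i} C_j$ I may assume the $C_i$ are nested. Put $\Theta_{1i} = \Theta_1 \cap C_i$; these are nested and $\cup_i \Theta_{1i} = \Theta_1 \cap F_1 = \Theta_1$. Since $C_i$ and $F_0$ are closed and disjoint, $\frak{cl}_{\tau_\Psi}(\Theta_0) \subseteq F_0$ and $\frak{cl}_{\tau_\Psi}(\Theta_{1i}) \subseteq C_i$ are disjoint. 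As $\Theta_0$ is relatively compact in the $\tau_\Psi$-topology, Theorem~\ref{b7}~{\sl i.} shows that $H_0$ and $H_{1i}$ are distinguishable for each $i$, and Theorem~\ref{b2} then produces a consistent sequence of tests.

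For part {\sl ii.}, assume there is a consistent sequence of tests and that $\Theta_1 \subseteq S = \cup_{j=1}^\infty L_j$ with each $L_j$ compact in the $\tau$-topology. By Theorem~\ref{b2} there are nested sets $\Theta_{1i}$, $\cup_i \Theta_{1i} = \Theta_1$, with $H_0$ and $H_{1i}$ distinguishable. Set $\Theta_{1ij} = \Theta_{1i} \cap L_j$. Each $\Theta_{1ij}$ is relatively compact in the $\tau$-topology, being contained in the compact $L_j$, and $H_0$ versus $H_{1ij}$ remains distinguishable because any uniformly consistent test for $\Theta_0$ against $\Theta_{1i}$ stays uniformly consistent against the smaller alternative $\Theta_{1ij}$. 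Since $\Theta_0$ is also relatively compact in the $\tau$-topology, Theorem~\ref{b7}~{\sl ii.} gives $\frak{cl}_{\tau}(\Theta_0) \cap \frak{cl}_{\tau}(\Theta_{1ij}) = \emptyset$ for all $i,j$. I then take $F_0 = \frak{cl}_{\tau}(\Theta_0)$, a closed set containing $\Theta_0$, and $F_1 = \cup_{i,j} \frak{cl}_{\tau}(\Theta_{1ij})$, a countable union of $\tau$-closed sets, hence an $F_\sigma$-set. Because $\Theta_{1i} = \cup_j \Theta_{1ij} \subseteq \cup_j \frak{cl}_{\tau}(\Theta_{1ij})$, one has $\Theta_1 \subseteq F_1$, and the disjointness just established gives $F_0 \cap F_1 = \emptyset$.

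The main obstacle is the use of the converse direction, Theorem~\ref{b7}~{\sl ii.}, which requires \emph{both} families to be relatively compact in the $\tau$-topology; the alternative $\Theta_{1i}$ need not be, and this is precisely what the additional $\sigma$-compactness hypothesis on $\Theta_1$ repairs, by letting me slice $\Theta_{1i}$ into the relatively compact pieces $\Theta_{1ij} = \Theta_{1i}\cap L_j$. I should also verify that a compact set in the $\tau$-topology is closed, so that $\frak{cl}_{\tau}(\Theta_{1ij}) \subseteq L_j$; this holds since the $\tau$-topology separates distinct probability measures and is therefore Hausdorff.
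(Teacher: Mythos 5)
Your proposal is correct and follows exactly the route the paper intends: the paper gives no detailed proof of Theorem~\ref{b9}, remarking only that it is ``easily derived'' by combining Theorem~\ref{b2} with Theorem~\ref{b7}, and your argument is precisely that derivation with the details filled in. In particular, your slicing $\Theta_{1ij}=\Theta_{1i}\cap L_j$ to restore the relative compactness needed for Theorem~\ref{b7}~{\sl ii.} is the right way to use the $\sigma$-compactness hypothesis, and the rest (nested $\Theta_{1i}=\Theta_1\cap C_i$ for sufficiency, $F_1=\cup_{i,j}\frak{cl}_{\tau}(\Theta_{1ij})$ for necessity) matches the paper's framework.
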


\begin{thm}\label{b9a} \sl{i.} There are pointwise consistent  tests if $\Theta_0$ and $\Theta_1$  are contained respectively in disjoint $\sigma$-compact set and $F_\sigma$- set in the $\tau_\Psi$-topology.

{\sl ii.} For the $\tau$-topology, the converse holds if we suppose additionally that  $\Theta_1$ is contained in  some $\sigma$-compact set.
\end{thm}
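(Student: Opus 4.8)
The plan is to read both parts as the pointwise-consistency counterpart of Theorem \ref{b9}. Theorem \ref{b1} translates the existence of pointwise consistent tests into the existence of nested distinguishable pairs $(H_{0i},H_{1i})$, while Theorem \ref{b7} translates distinguishability of a single pair into the disjointness of $\tau_\Psi$-closures (under a relative-compactness hypothesis). So in each direction the only work is to match a decomposition of $\Theta_0,\Theta_1$ against the decomposition supplied by these two theorems, the difference from Theorem \ref{b9} being that now \emph{both} $\Theta_0$ and $\Theta_1$ must be split.

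For the sufficiency part \emph{i.}, I would first exhaust the two containers by nested pieces. Write the $\sigma$-compact set $S_0\supseteq\Theta_0$ as an increasing union $\cup_i C_i$ of compacts and the $F_\sigma$-set $S_1\supseteq\Theta_1$ as an increasing union $\cup_i F_i$ of $\tau_\Psi$-closed sets, and put $\Theta_{0i}=\Theta_0\cap C_i$ and $\Theta_{1i}=\Theta_1\cap F_i$. These are nested with $\cup_i\Theta_{0i}=\Theta_0$ and $\cup_i\Theta_{1i}=\Theta_1$. Each $\Theta_{0i}$ is relatively compact, and since $\frak{cl}_{\tau_\Psi}(\Theta_{0i})\subseteq C_i\subseteq S_0$ and $\frak{cl}_{\tau_\Psi}(\Theta_{1i})\subseteq F_i\subseteq S_1$ with $S_0\cap S_1=\emptyset$, the closures of $\Theta_{0i}$ and $\Theta_{1i}$ are disjoint. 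Theorem \ref{b7} \emph{i.} then makes each pair $H_{0i},H_{1i}$ distinguishable, and Theorem \ref{b1} delivers a pointwise consistent sequence of tests.

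For the converse \emph{ii.}, I would run the construction backwards. Theorem \ref{b1} yields nested $\Theta_{0i},\Theta_{1i}$ with unions $\Theta_0,\Theta_1$ such that every pair $H_{0i},H_{1i}$ is distinguishable. Using the extra hypothesis that $\Theta_1$ lies in a $\sigma$-compact set $\cup_j D_j$ with $D_j$ compact and nested, I would pass to $\Theta_{1i}'=\Theta_{1i}\cap D_i$, which is relatively compact, still nested, still exhausts $\Theta_1$, and remains distinguishable from $\Theta_{0i}$ (shrinking the alternative preserves distinguishability). By monotonicity every cross-pair $(\Theta_{0i},\Theta_{1j}')$ is distinguishable as well, since a test separating the $\max(i,j)$-th pair separates it. Applying Theorem \ref{b7} \emph{ii.} to each such pair gives $\frak{cl}_{\tau}(\Theta_{0i})\cap\frak{cl}_{\tau}(\Theta_{1j}')=\emptyset$, whence $\cup_i\frak{cl}_{\tau}(\Theta_{0i})$ and $\cup_i\frak{cl}_{\tau}(\Theta_{1i}')$ are the desired disjoint containers; the second is $\sigma$-compact (hence $F_\sigma$) because each $\frak{cl}_{\tau}(\Theta_{1i}')$ is compact.

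The main obstacle sits in the converse and is exactly the relative-compactness demand of Theorem \ref{b7} \emph{ii.}: that statement is symmetric and requires \emph{both} members of each pair to be $\tau$-relatively compact, because the separating partition produced by distinguishability lives a priori on the product space $\Omega^d$ and can be pulled back to $\Omega$ only where the map $P\mapsto P^{\otimes d}$ is $\tau$-continuous, i.e. on $\tau$-relatively compact sets. The $\sigma$-compactness of $\Theta_1$ secures this on the alternative side through the $\Theta_{1i}'$. Securing it on the hypothesis side, so that the $\frak{cl}_{\tau}(\Theta_{0i})$ are genuinely compact and their union $\sigma$-compact, is the delicate point: I would obtain it by exploiting the symmetry of pointwise consistency (which lets $\Theta_0$ be treated as an alternative and $1-K_n$ as the test), so that the same compact-exhaustion argument renders the hypothesis pieces relatively compact. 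Once both sides of every pair are relatively compact, the rest is the routine bookkeeping of nested unions already handled by Lemma \ref{l33} and its role-reversed form.
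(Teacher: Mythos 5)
Part \emph{i.} of your proposal is correct and is exactly the paper's intended route: the paper gives no separate proof of Theorem \ref{b9a}, asserting only that it is "easily derived" by combining Theorems \ref{b2}--\ref{b3} with Theorem \ref{b7}, and your decomposition $\Theta_{0i}=\Theta_0\cap C_i$, $\Theta_{1i}=\Theta_1\cap F_i$, followed by Theorem \ref{b7} \emph{i.} and Theorem \ref{b1}, is precisely that derivation (the inclusion $\frak{cl}_{\tau_\Psi}(\Theta_{0i})\subseteq C_i$ tacitly uses that compact sets are closed, which is fine for the Hausdorff topologies $\tau$ and $\tau_\Phi$).

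Part \emph{ii.} has a genuine gap, and it sits exactly where you flagged the "delicate point". Your repair --- "exploiting the symmetry of pointwise consistency" to make the pieces $\Theta_{0i}$ relatively compact --- is circular: the relative compactness of the alternative pieces $\Theta_{1i}'=\Theta_{1i}\cap D_i$ did not come from pointwise consistency but from the \emph{assumed} $\sigma$-compact container of $\Theta_1$; swapping the roles of hypothesis and alternative (replacing $K_n$ by $1-K_n$) reproduces the same situation with no $\sigma$-compact container on the $\Theta_0$ side, so nothing is gained. Worse, no argument can supply the missing compactness, because it can genuinely fail. Take $\Omega=[0,1]$, $\Theta_0=\{\delta_x:\, x\in[0,1]\}$ the set of all Dirac measures, and $\Theta_1=\{P_0\}$ with $P_0$ Lebesgue measure. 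The test that rejects $H_0$ iff $X_1\ne X_2$ is uniformly consistent, so pointwise consistent tests exist, and $\Theta_1$ is compact; yet $\Theta_0$ is contained in no $\sigma$-compact set in the $\tau$-topology. Indeed, a $\sigma$-compact container would force infinitely many distinct Diracs into a single $\tau$-compact set; by Ganssler's Theorem 2.6 (relative compactness implies relative sequential compactness --- the same fact the paper uses in proving Theorem \ref{t4.1}) some subsequence $\delta_{x_{k_j}}$ would converge setwise, but for the Borel set $B$ of even-indexed points $x_{k_j}$ the values $\delta_{x_{k_j}}(B)$ oscillate between $0$ and $1$. Hence the sets $\frak{cl}_{\tau}(\Theta_{0i})$ need not be compact and their union need not be $\sigma$-compact under the stated hypotheses: the converse must be read, in parallel with Theorem \ref{b9} where relative compactness of $\Theta_0$ is a standing hypothesis for both directions, as \emph{also} assuming that $\Theta_0$ lies in some $\sigma$-compact set. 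Under that reading the rest of your bookkeeping --- intersecting with nested compacts, passing to the cross-pairs $(\Theta_{0i},\Theta_{1j}')$ by monotonicity, applying Theorem \ref{b7} \emph{ii.}, and taking unions of closures --- is correct and is the intended argument; the symmetry step should simply be replaced by an appeal to that additional assumption.
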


A sequence of probability measures $P_m$  converging to some probability measure $P$ is compact in the $\tau$-topology ( Ganssler (1971)). Thus the condition of indistinguishability
   can be given in the following form.
\vskip 0.5cm
   For any set $\Upsilon \in \Lambda$, denote $\mbox{cl}_{s\tau}(\Upsilon)$ the sequential closure of $\Upsilon$ in the $\tau$-topology.
  \begin{thm}\label{seq1} For any sets $\Theta_0, \Theta_1 \subset \Lambda$,  $$\mbox{cl}_{s\tau}(\Theta_0) \cap \mbox{cl}_{s\tau} (\Theta_1) \ne \emptyset$$ implies
    indistinguishability of $H_0$ and $H_1$.
    \end{thm}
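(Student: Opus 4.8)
The plan is to exploit a common limit point of the two sequential closures and reduce the strong indistinguishability inequality $\alpha(K_n)+\beta(K_n)\ge 1$ to the trivial identity $\int K_n+\int(1-K_n)=1$ evaluated at that limit; the whole weight of the proof then falls on a single measure-theoretic lemma, namely that setwise convergence of the marginals is inherited by the $n$-fold product measures.

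First I would fix a point $P^*\in\mbox{cl}_{s\tau}(\Theta_0)\cap\mbox{cl}_{s\tau}(\Theta_1)$. By the definition of the sequential $\tau$-closure there are sequences $P_j\in\Theta_0$ and $Q_j\in\Theta_1$ with $P_j\to P^*$ and $Q_j\to P^*$ in the $\tau$-topology, i.e. $P_j(A)\to P^*(A)$ and $Q_j(A)\to P^*(A)$ for every $A\in\frak{B}$. (It is exactly the sequential formulation, together with the fact recalled above that a convergent sequence is $\tau$-compact, that guarantees we may work with genuinely convergent sequences.) Now fix a sample size $n$ and an arbitrary test $K_n:\Omega^n\to[0,1]$. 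Granting for the moment the product lemma below, which yields
\begin{equation*}
\int_{\Omega^n}K_n\,dP_j^{\otimes n}\to\int_{\Omega^n}K_n\,d(P^*)^{\otimes n},\qquad \int_{\Omega^n}(1-K_n)\,dQ_j^{\otimes n}\to\int_{\Omega^n}(1-K_n)\,d(P^*)^{\otimes n},
\end{equation*}
I would pass to the limit in $\alpha(K_n)\ge\alpha_{P_j}(K_n)=\int K_n\,dP_j^{\otimes n}$ and in $\beta(K_n)\ge\beta_{Q_j}(K_n)=\int(1-K_n)\,dQ_j^{\otimes n}$ to obtain
\begin{equation*}
\alpha(K_n)+\beta(K_n)\ge\int_{\Omega^n}K_n\,d(P^*)^{\otimes n}+\int_{\Omega^n}(1-K_n)\,d(P^*)^{\otimes n}=1.
\end{equation*}
Since $n$ and $K_n$ are arbitrary this is precisely indistinguishability, and it is worth noting that this direct argument delivers the strong inequality $\alpha+\beta\ge1$ rather than merely the non-existence of uniformly consistent tests.

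It then remains to prove the lemma: if $R_j\to R$ setwise on $(\Omega,\frak{B})$, then $R_j^{\otimes n}\to R^{\otimes n}$ setwise (convergence of the integrals of all bounded measurable functions). I would induct on $n$, the step being that $S_j\to S$ setwise on $\Omega^{n-1}$ together with $R_j\to R$ setwise on $\Omega$ imply $S_j\otimes R_j\to S\otimes R$ setwise. Fixing a bounded measurable $g$ on $\Omega^n$ and writing $F_j(y)=\int g(x,y)\,dR_j(x)$ and $F(y)=\int g(x,y)\,dR(x)$, setwise convergence $R_j\to R$ gives $F_j\to F$ pointwise with common bound $\|g\|_\infty$, and I would split
\begin{equation*}
\int g\,d(S_j\otimes R_j)-\int g\,d(S\otimes R)=\int (F_j-F)\,dS_j+\Bigl(\int F\,dS_j-\int F\,dS\Bigr),
\end{equation*}
where the second term vanishes in the limit because $F$ is bounded measurable and $S_j\to S$ setwise.

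The main obstacle is the first term $\int(F_j-F)\,dS_j$, in which a sequence converging only pointwise is integrated against the moving measures $S_j$. Here the plan is to introduce a dominating probability measure $\sigma$ (for instance $\sigma=\tfrac12 S+\sum_j2^{-j-1}S_j$) and to invoke the Vitali--Hahn--Saks theorem, by which the setwise convergence $S_j\to S$ forces the densities $dS_j/d\sigma$ to be uniformly integrable. Given $\epsilon>0$ I would take the associated $\delta$ and apply Egorov's theorem to produce a set $E$ with $\sigma(E^c)<\delta$ on which $F_j\to F$ uniformly: the integral over $E$ is bounded by $\sup_E|F_j-F|\to0$, and the integral over $E^c$ by $2\|g\|_\infty\sup_j S_j(E^c)<2\|g\|_\infty\epsilon$; letting $j\to\infty$ and then $\epsilon\to0$ closes the step. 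The delicate point in the induction is that uniform integrability cannot simply be carried over from the factors, since it need not survive the formation of product densities; it must instead be re-extracted at each stage from the setwise convergence of the products themselves through Vitali--Hahn--Saks.
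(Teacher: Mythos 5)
Your proof is correct, but it takes a genuinely different route from the paper. The paper does not argue directly: it obtains Theorem \ref{seq1} as a corollary of its Theorem \ref{b7} {\sl ii} together with Ganssler's result that a $\tau$-convergent sequence (with its limit) is compact in the $\tau$-topology; the heavy lifting there is done by Le Cam and Schwartz's Proposition~1, which says that $P \mapsto P\otimes P$ is $\tau_\Phi$-continuous on relatively compact sets, and indistinguishability is then extracted through the partition criterion of Proposition \ref{pr1}. You instead prove the needed fact from scratch as a purely sequential statement --- setwise convergence $P_j \to P^*$ implies setwise convergence $P_j^{\otimes n} \to (P^*)^{\otimes n}$ --- by induction on $n$, re-extracting uniform absolute continuity at each stage from Vitali--Hahn--Saks and combining it with Egorov, and then you pass to the limit directly in $\alpha_{P_j}(K_n)+\beta_{Q_j}(K_n)$, which yields the paper's strong inequality $\alpha(K_n)+\beta(K_n)\ge 1$ without any detour through finite partitions. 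What the paper's route buys is brevity and integration with its general compactness framework; what your route buys is a self-contained, elementary argument that makes visible exactly where sequentiality is indispensable: Vitali--Hahn--Saks is intrinsically a statement about sequences, and indeed the product map is \emph{not} continuous for nets in the $\tau$-topology (the Peres example cited in the paper's remark after the theorem), so your proof localizes precisely the obstruction that forces the theorem to be stated with sequential closures $\mbox{cl}_{s\tau}$ rather than topological closures $\mbox{cl}_{\tau}$. Your closing observation --- that uniform integrability of the factor densities cannot simply be transported to the product densities but must be recovered at each induction step from setwise convergence of the products themselves --- is also accurate and is the one technical point a careless version of this argument would miss.
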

     {\bf Remark.} We could not prove the indistinguishability of any sets $\Theta_0, \Theta_1 \subset \Lambda$
   such  that $\mbox{cl}_\tau(\Theta_0) \cap \mbox{cl}_\tau (\Theta_1) \ne \emptyset$.
   As showed Peres, the map $P \to P\otimes P$ of $\Lambda \to \Lambda\otimes\Lambda$ is not continuous in the $\tau$-topology (8.10.116 in Bogachev \cite{bo}).

\subsection{ Necessary conditions of distinguishability and distance on variation}
   The distance on variation is a standard tool for the study of distinguishability of hypotheses (Hoefding and Wolfowitz \cite{ho}, Le Cam \cite{le73}, Lehmann and Romano \cite{le},
van der Vaart \cite{van}, Devroye and Lugosi \cite{de}, Ingster and Suslina \cite{is} and other numerous papers).

   For any probability measures $P<<\nu$ and $Q<<\nu$ define the variational distance
$$
\mbox{\rm var}(P,Q) = \frac{1}{2}\int_\Omega |dP/d\nu - dQ/d\nu| \,d\nu.
$$
For any sets of probability measures $A$ and $B$ denote
$$
\mbox{\rm var}(A,B) = \inf\{\mbox{var}(P,Q):\, P\in A, Q \in B\}.
$$
Denote $[A]$  the convex hull of set $A \subset\Lambda$.

The proof of distinguishability is based usually on the following Theorem (Kraft \cite{kr}).
\begin{thm}\label{t5} Let  probability measures in $\Theta_0 \cup \Theta_1$ be absolutely continuous with respect to measure $\nu$. Then, for any test $K$, there holds
\begin{equation}\label{m2}
\alpha(K,\Theta_0) + \beta(K,\Theta_1) \ge 1 - \mbox{\rm var}([\Theta_0],[\Theta_1])
\end{equation}
and this lower bound is attained.
\end{thm}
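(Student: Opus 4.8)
The plan is to recognize this as a minimax-type identity: the quantity $\inf_K [\alpha(K,\Theta_0) + \beta(K,\Theta_1)]$ is governed by how close the \emph{convex hulls} $[\Theta_0]$ and $[\Theta_1]$ can come in variational distance. The key observation is that both $\alpha(K,\Theta_0) = \sup_{P\in\Theta_0}\int K\,dP$ and $\beta(K,\Theta_1) = \sup_{Q\in\Theta_1}\int(1-K)\,dQ$ are convex in the measure argument, so taking the supremum over $\Theta_0$ (resp. $\Theta_1$) is the same as taking it over the convex hull $[\Theta_0]$ (resp. $[\Theta_1]$). Hence I may assume from the outset that $\Theta_0$ and $\Theta_1$ are convex, replacing them by $[\Theta_0]$ and $[\Theta_1]$, and it suffices to prove
\begin{equation}\label{eq:reduce}
\inf_K\Bigl[\,\sup_{P\in[\Theta_0]}\int K\,dP + \sup_{Q\in[\Theta_1]}\int(1-K)\,dQ\,\Bigr] = 1 - \mathrm{var}([\Theta_0],[\Theta_1]),
\end{equation}
where the infimum runs over measurable $K:\Omega\to[0,1]$.

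For the lower bound (inequality \eqref{m2} itself), I would fix any $P\in[\Theta_0]$ and $Q\in[\Theta_1]$ and bound the left side below by the two-point problem: for any test $K$,
$$
\int K\,dP + \int(1-K)\,dQ \ge \inf_{0\le K\le 1}\Bigl[\int K\,dP + \int(1-K)\,dQ\Bigr] = 1 - \mathrm{var}(P,Q),
$$
the last equality being the classical Neyman--Pearson fact that the optimal test for two simple hypotheses achieves $1-\mathrm{var}(P,Q)$ (take $K = 1$ on $\{dQ/d\nu > dP/d\nu\}$). Since $\alpha(K,\Theta_0)\ge\int K\,dP$ and $\beta(K,\Theta_1)\ge\int(1-K)\,dQ$ for these particular $P,Q$, taking the supremum over $P\in[\Theta_0]$, $Q\in[\Theta_1]$ — equivalently the infimum of $\mathrm{var}(P,Q)$ — yields \eqref{m2}.

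The harder half is attainment, i.e.\ exhibiting a single test $K^\ast$ that achieves the bound; this is where I expect the main obstacle. Here I would invoke a minimax theorem to interchange the infimum over $K$ with the suprema over the convex sets $[\Theta_0],[\Theta_1]$. Concretely, writing the objective in \eqref{eq:reduce} as a function $\Phi(K,(P,Q)) = \int K\,dP + \int(1-K)\,dQ$, which is linear (hence convex and concave) in $K$ and affine in $(P,Q)$, and noting that the test space $\{K:0\le K\le 1\}$ is convex and compact in a suitable weak$^\ast$ topology (e.g.\ viewing tests as elements of the unit ball of $L_\infty(\nu)$, using that $\Theta_0\cup\Theta_1\ll\nu$), Sion's minimax theorem gives
$$
\inf_K \sup_{P,Q}\Phi = \sup_{P,Q}\inf_K\Phi = \sup_{P\in[\Theta_0],\,Q\in[\Theta_1]}\bigl(1-\mathrm{var}(P,Q)\bigr) = 1 - \mathrm{var}([\Theta_0],[\Theta_1]).
$$
The main technical points to verify are the compactness of the test set in the chosen topology and the requisite continuity of $\Phi$ in $K$ for that topology, so that the inner infimum is attained by an actual test rather than merely approached; the absolute continuity hypothesis $\Theta_0\cup\Theta_1\ll\nu$ is exactly what lets me work inside $L_\infty(\nu)$ and secure weak$^\ast$ compactness via Banach--Alaoglu. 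Once the minimax identity holds and the extremal $(P^\ast,Q^\ast)$ realizing $\mathrm{var}([\Theta_0],[\Theta_1])$ is identified (using closedness/compactness of the hulls), the associated Neyman--Pearson test for $(P^\ast,Q^\ast)$ is the desired $K^\ast$ attaining the bound.
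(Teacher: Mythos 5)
The paper never proves Theorem \ref{t5}; it is quoted as a known result of Kraft \cite{kr}, so your proposal must stand on its own. Your first half does: linearity of $P\mapsto\int K\,dP$ lets you replace $\Theta_0,\Theta_1$ by their convex hulls, and combining the two-point Neyman--Pearson identity $\inf_{0\le K\le 1}\bigl[\int K\,dP+\int(1-K)\,dQ\bigr]=1-\mathrm{var}(P,Q)$ with the supremum over $P\in[\Theta_0]$, $Q\in[\Theta_1]$ gives (\ref{m2}) correctly.

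The gap is in the attainment step. Your conclusion rests on an extremal pair $(P^\ast,Q^\ast)\in[\Theta_0]\times[\Theta_1]$ realizing $\mathrm{var}([\Theta_0],[\Theta_1])$, justified ``using closedness/compactness of the hulls''. But the theorem assumes no such thing: $\Theta_0,\Theta_1$ are arbitrary dominated families, their convex hulls (finite convex combinations) are in general neither closed nor compact in any relevant topology, and the infimum defining $\mathrm{var}([\Theta_0],[\Theta_1])$ need not be attained. For example, take $\Theta_0=\{\nu\}$ with $\nu$ Lebesgue measure on $(0,1)$ and $\Theta_1=\{P_n\}$ with densities $1+(\tfrac12+\tfrac{1}{2n})\sin(2\pi x)$: the infimum $\tfrac{1}{2\pi}$ is approached but never attained on the hulls, yet the bound \emph{is} attained, by $K^\ast=1_{\{\sin(2\pi x)>0\}}$ (check: $\alpha=\tfrac12$, $\sup_n\beta=\tfrac12-\tfrac{1}{2\pi}$). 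Moreover, even when an extremal pair exists, an arbitrary Neyman--Pearson test for it need not be minimax; that requires the saddle-point property, i.e.\ that $(P^\ast,Q^\ast)$ is also least favorable for the chosen test. The repair stays entirely inside your own framework: drop the extremal pair and prove attainment of the \emph{outer} infimum over $K$ directly. The test set $\{K\in L_\infty(\nu):0\le K\le 1\}$ is weak$^\ast$ compact (Banach--Alaoglu, with $\nu$ $\sigma$-finite so that $L_\infty(\nu)=(L_1(\nu))^\ast$); each $K\mapsto\Phi(K,(P,Q))$ is weak$^\ast$ continuous because $dP/d\nu,\,dQ/d\nu\in L_1(\nu)$; hence $K\mapsto\sup_{P,Q}\Phi(K,(P,Q))=\alpha(K,\Theta_0)+\beta(K,\Theta_1)$ is weak$^\ast$ lower semicontinuous and attains its infimum at some $K^\ast$. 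Sion's theorem (with the tests as the compact convex side, exactly as you set it up) then identifies this attained infimum with $1-\mathrm{var}([\Theta_0],[\Theta_1])$, which is what attainment means.
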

A sequence of densities $f_k$ converges to density $f_0$ in $L_1(\nu)$  (see Dunford and Schwarz \cite{du}, Th.12, sec.8, Ch IV),
if, for any measurable set $B \in \frak{B}$, there holds
 \begin{equation}\label{m3}
 \lim_{k\to\infty} \int_B f_k\, d\nu= \int_B f_0 d\nu
\end{equation}
 and $f_k$ converges to $f_0$ in measure.

 If (\ref{m3}) holds and sequence $f_k$ does not converges to $f_0$ in measure, we could not distinguish
 the set of hypotheses $\Theta_0=\{f_0\}$ and the set of alternatives $\{f_1,f_2,\ldots\}$.
 By Mazur Theorem (see Yosida \cite{yo}, Th.2, sec.1, Ch.5 ), the weak convergence $f_k$ to $f_0$ implies the convergence of convex combinations of $f_k$ to $f_0$ in $L_1(\nu)$. Therefore the right-hand side of (\ref{m2}) equals one.
\subsection{Uniform consistency and distance method}    Let $  \frak{E}_n = (\Omega_n,  \frak{B}_n,  \frak{P}_n)$ be a sequence of statistical experiments  with families of probability measures  $ \frak{P}_n= \{P_{\theta,n}, \theta \in \Theta\}$. Suppose   the set $\Theta$ is a metric space with a metric $\rho$.
Let $X_n$ be random variable on probability space $(\Omega_n,  \frak{B}_n,  P_{\theta,n})$ and let $\hat\theta_n = \hat\theta_n(X_n)$ be an estimator of parameter $\theta$.

  Hoefding and Wolfowitz \cite{ho} proposed  classification of metrics on  consistent and uniformly consistent.
The metric $\rho$ is consistent in $\tilde\Theta\subset\Theta$, if, for each $\epsilon >0$  and for each $\theta \in \tilde\Theta$, there holds
\begin{equation}\label{x6}
\lim_{n\to\infty} P_\theta(\rho(\hat \theta_n,\theta) > \epsilon) = 0.
\end{equation}
The distance $\rho$ is uniformly consistent in $\tilde\Theta$ if the convergence in (\ref{x6}) is uniform for $\theta \in \tilde\Theta$.
The consistency (uniform consistency) of metric is equivalent the consistency (uniform consistency) of estimator.

Theorem \ref{b21} is a version of Theorem proved Hoefding and Wolfowitz  (Th. 3.1, \cite{ho}).

\begin{thm}\label{b21} Let $\rho$ be uniformly consistent in $\Theta_0 \cup \Theta_1$. Then the hypothesis and the alternative are distinguishable if
\begin{equation}\label{x7}
\rho(\Theta_0,\Theta_1) = \inf\{\rho(\tau,\eta) : \tau \in \Theta_0, \eta \in \Theta_1\} >0.
\end{equation}
\end{thm}
As wellknown the Kolmogorov-Smirnov distance and  the distance corresponding omega-squared test are uniformly consistent.

Theorem \ref{b21} provides sufficient conditions for distinquishability of hypotheses. Below we show that these conditions are necessary if some additional assumptions hold.
 We narrow the setup and consider the problem of hypothesis testing on a value of functional $T:\Lambda \to \Theta$ on a sample of i.i.d.r.v.'s.

   We say that the set $V \subset \Psi$ is saturated, if for any $\eta \in V$ and any sequence  $\eta_n \in V,$ $ \eta_n \to \eta $ as $n \to \infty $,  there are $P \in \Lambda$ and a sequence   $P_n \in \Lambda$  such that $T(P) = \eta, T(P_n) = \eta_n$ and $P_n \to P$ as $n \to \infty$ in  the $\tau$-topology.

   If the sets $\Theta_0$ and $\Theta_1$ are saturated, we can implement the indistinguishability criterion of Theorem \ref{seq1}.

      \begin{thm}\label{5a}  If $\Theta_0$ is relatively compact and $\Theta_0$ and $\Theta_1$ are saturated, then  (\ref{x7}) is necessary.
   \end{thm}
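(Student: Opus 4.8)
The plan is to prove the contrapositive: assuming $\rho(\Theta_0,\Theta_1)=0$, I would show that $H_0$ and $H_1$ are indistinguishable, which makes (\ref{x7}) necessary. Since the test is really carried out on probability measures, I would pass to the lifted sets $\Lambda_0=\{P\in\Lambda:\ T(P)\in\Theta_0\}$ and $\Lambda_1=\{P\in\Lambda:\ T(P)\in\Theta_1\}$, so that the target becomes to exhibit a single measure $P^*\in\mbox{cl}_{s\tau}(\Lambda_0)\cap\mbox{cl}_{s\tau}(\Lambda_1)$ and then quote Theorem \ref{seq1}.

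First I would unpack $\rho(\Theta_0,\Theta_1)=0$: there are $\tau_n\in\Theta_0$ and $\eta_n\in\Theta_1$ with $\rho(\tau_n,\eta_n)\to 0$. Relative compactness of $\Theta_0$ lets me pass to a subsequence along which $\tau_n$ converges to some $\tau^*\in\overline{\Theta_0}$; because $\rho(\tau_n,\eta_n)\to 0$, the matching $\eta_n$ converge to the \emph{same} $\tau^*$. Thus a $\Theta_0$-valued sequence and a $\Theta_1$-valued sequence approach one common point $\tau^*$ in the metric $\rho$. The sole role of relative compactness here is to squeeze a single common limit out of the two sequences.

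Next I would lift this picture from $\Theta$ up to $\Lambda$ by saturation. Applying the saturation of $\Theta_0$ to $\tau_n\to\tau^*$ yields a measure $P^*\in\Lambda$ with $T(P^*)=\tau^*$ and measures $P_n\to P^*$ in the $\tau$-topology with $T(P_n)=\tau_n\in\Theta_0$; hence $P_n\in\Lambda_0$ and $P^*\in\mbox{cl}_{s\tau}(\Lambda_0)$. Applying the saturation of $\Theta_1$ to $\eta_n\to\tau^*$, but aimed at the very measure $P^*$ already fixed over $\tau^*$, yields measures $Q_n\to P^*$ with $T(Q_n)=\eta_n\in\Theta_1$, so $Q_n\in\Lambda_1$ and $P^*\in\mbox{cl}_{s\tau}(\Lambda_1)$. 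Then $P^*$ sits in both sequential $\tau$-closures, Theorem \ref{seq1} forces indistinguishability, and (\ref{x7}) is therefore necessary.

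The main obstacle is the matching of limits in the last step: I must secure that the two liftings, one through $\Theta_0$ and one through $\Theta_1$, converge to the \emph{same} measure $P^*$ rather than merely to two measures sharing the functional value $\tau^*$. This is exactly the content the saturation hypotheses must supply, so the argument has to invoke saturation at the point $\tau^*=T(P^*)$ to steer the $\Lambda_1$-approximation onto the measure $P^*$ fixed by the $\Theta_0$-lifting; making this explicit is the delicate part. A secondary point to treat carefully is that relative compactness only places $\tau^*$ in $\overline{\Theta_0}$, so I would either note that saturation is phrased for limits of in-set sequences or first reduce to the case $\tau^*\in\Theta_0$.
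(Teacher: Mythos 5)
Your proposal is correct and follows essentially the same route as the paper's own proof: negate (\ref{x7}), use relative compactness of $\Theta_0$ to extract a subsequence along which the $\Theta_0$- and $\Theta_1$-sequences share a common limit $\tau^*$, lift both sequences to measure space via saturation, and conclude indistinguishability from Theorem \ref{seq1}. The delicate point you flag --- forcing the two liftings to converge to the \emph{same} measure $P^*$ rather than merely to two measures with the same functional value $\tau^*$ --- is genuine, but the paper's proof passes over it in silence (it says only that since $\Theta_0$ and $\Theta_1$ are saturated, Theorem \ref{seq1} gives a contradiction), so on this point your write-up is if anything more careful than the original.
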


   {\it Example. Kolmogorov tests.} Let $\Omega = (0,1)$ and let $\rho(Q,P) = \max_{x \in (0,1)} |F(x) -   F_0(x)|$ where $F(x)$ and $F_0(x)$ are distribution functions of probability measures $P$ and $Q$. Denote $P_0$ the Lebesque measure. Define the probability measures $P_u = P_0 + uG, 0\le u <1,$ where  the signed measure $G$ has the density $dG/dP_0(x) = -1$ if $x \in (0,1/2]$ and $dG/dP_0(x) = 1$ if $x \in (1/2,1)$.
Then, for any $u, 0\le u < 1$ and $u_n \to u$ as $n \to \infty$ one can put  $P = P_u$ and $P_n = P_{u_n}$ in the definition of saturated set.
\vskip 0.25cm
Denote $\Lambda_0$ the set of all signed measures $G$ having bounded variation and such that $G(\Omega)=0$.
The definition of the $\tau$-topology in $\Lambda_0$ is akin to the definition in $\Lambda$.

Suppose the functional $T:\Lambda \to R^d$ satisfies the following  condition of Hadamard  differentiability.
\vskip 0.25cm
\noindent{\bf A.} For any $P\in\Lambda$ with $T(P) \in \Theta_0 \cup \Theta_1$, there is vector function  $\vec h = (h_1,\ldots,h_d), h_i \in L_1(P), E[h_i(X)] = 0, 1 \le i \le k$, such that the functions $h_1,\ldots,h_d$ are linear independent, $E_P[h_i(X)]  = 0, 1\le i <d$,  and for any $G \in \Lambda_0, G << P$  we have
\begin{equation*}
\left| T(P + t_n G_n) - T(P) - t_n\int \vec h d G \right| = o(|t_n|)
\end{equation*}
as $n \to \infty$, for all converging sequences $t_n \to 0$ and for all sequences $G_n \to_{\tau} G$ with $G_n \in \Lambda_0, P + t_n G_n \in \Lambda$.
\vskip 0.25cm
In A we do not introduce the metric in $\Lambda$.  In the definition of Hadamard derivative  the existence of metric in $\Lambda$  is required usually in statistical problem. If this metric is continuous in the $\tau$-topology, Theorem \ref{b6w} hold for such a definition of Hadamard differentiability as well. Note that, in A, it suffices to consider only the signed measures $G$ with the densities $\frac{dG}{dP} = \sum_{i=1}^d c_i h_i, c_i \in R^1.$

\begin{thm}\label{b6w} Assume A. Then the functional $T$ is saturated.
\end{thm}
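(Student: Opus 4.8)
The plan is to realize the limiting parameter $\eta$ and the approaching sequence $\eta_n$ by \emph{first-order perturbations} of a single base measure. Since $\eta\in V$ is a functional value, fix $P\in\Lambda$ with $T(P)=\eta$; condition A then applies at $P$. Write $\Delta_n=\eta_n-\eta\to 0$ in $R^d$. I will look for $P_n$ of the form $P_n=P+G(s_n)$, where $G(s)=\sum_{i=1}^d s_i G^{(i)}$ is a fixed finite-dimensional family of perturbations and $s_n\to 0$; for such $P_n$ the required convergence $P_n\to_\tau P$ is automatic, since for every bounded measurable $f$ one has $\int f\,dP_n-\int f\,dP=\sum_i (s_n)_i\int f\,dG^{(i)}\to 0$.

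First I would produce the directions $G^{(i)}$. Let $g_1,\dots,g_d$ be bounded functions with $E_P[g_i]=0$ and let $G^{(i)}$ be the signed measure $dG^{(i)}=g_i\,dP$; then $G^{(i)}\in\Lambda_0$, $G^{(i)}\ll P$, and the density $1+\sum_i s_i g_i$ of $P+G(s)$ is nonnegative for $|s|$ small, so $P+G(s)\in\Lambda$ there. The derivative map $D(G)=\int\vec h\,dG$ sends $G^{(i)}$ to $b_i=E_P[\vec h\,g_i]$, which is finite because $\vec h\in L_1(P)$ and $g_i$ is bounded. The key linear-algebra step is that the $b_i$ can be chosen to form a basis of $R^d$: if the image of $g\mapsto E_P[\vec h\,g]$ (over bounded mean-zero $g$) were a proper subspace, some $a\neq 0$ would satisfy $E_P[(a^\top\vec h)\,g]=0$ for all such $g$, forcing $a^\top\vec h$ to be $P$-a.s.\ constant and hence, since $E_P[a^\top\vec h]=0$, identically zero --- contradicting the linear independence of $h_1,\dots,h_d$ in A. Let $B=[\,b_1\ \cdots\ b_d\,]$, an invertible matrix, so that $D(G(s))=Bs$.

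Next I would upgrade the sequential hypothesis A to genuine first-order differentiability of the finite-dimensional map $\Phi(s):=T(P+G(s))-\eta$. Given any $s^{(m)}\to 0$ with $s^{(m)}\neq 0$, put $t_m=|s^{(m)}|$ and $u_m=s^{(m)}/t_m$, and pass to a subsequence with $u_m\to u$ on the unit sphere. Then $G(s^{(m)})=t_m G(u_m)$ with $G(u_m)\to_\tau G(u)$ (the densities $\sum_i (u_m)_i g_i$ converge boundedly), so A gives $T(P+t_m G(u_m))-\eta=t_m\,Bu+o(t_m)$; comparing with $Bs^{(m)}=t_m B u_m$ and using $u_m\to u$ yields $\Phi(s^{(m)})-Bs^{(m)}=o(t_m)$. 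Since this holds along every subsequence, $\Phi(s)=Bs+o(|s|)$ as $s\to 0$.

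The main obstacle is the remaining \textbf{exact} solvability: I must find, for each large $n$, a point $s_n$ (with $s_n\to 0$) solving $\Phi(s_n)=\Delta_n$ exactly, so that $T(P_n)=\eta_n$. First-order differentiability with invertible $B$ gives solvability only to leading order, and a genuine nonlinear solution requires a topological argument: I would apply Brouwer's fixed-point theorem, or equivalently a degree computation, to the map $s\mapsto s-B^{-1}(\Phi(s)-\Delta_n)$ on a ball of radius $\rho_n\to 0$ chosen so that the $o(|s|)$ remainder and $|B^{-1}\Delta_n|$ are both dominated by $\rho_n/2$; since the degree of $\Phi$ near $0$ agrees with that of the isomorphism $B$, the image of $\Phi$ contains a full neighborhood of $0$, hence $\Delta_n$ for all large $n$. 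The delicate point on which this step rests is the \emph{continuity} of $s\mapsto T(P+G(s))$ on the small ball, which does not follow from A at $P$ alone but requires A at the neighbouring base measures $P+G(s)$; this is exactly where the $\tau$-continuity proviso noted after condition A enters, and it is the step I expect to need the most care. Once $s_n$ is obtained, $P_n=P+G(s_n)\to_\tau P$ as noted, completing the verification that $T$ is saturated.
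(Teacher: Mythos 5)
Your proposal follows, in substance, the same route as the paper's proof: perturb the base measure $P$ along a finite-dimensional family of signed measures with bounded densities with respect to $P$, show that the finite-dimensional map $\Phi(s)=T(P+G(s))-\eta$ is differentiable at $s=0$ with invertible derivative, and conclude saturation. The differences in execution are to your credit. Where the paper takes the perturbation densities to be the $h_i$ themselves (assuming them bounded, with the unbounded case dispatched by an unexplained ``approximate $\vec h$ by bounded functions''), you take arbitrary bounded mean-zero $g_i$ and prove invertibility of the matrix $B$ with columns $E_P[\vec h\, g_i]$ by a duality argument; this handles unbounded $\vec h$ with no approximation step and no re-justification of linear independence. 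Where the paper invokes Shapiro \cite{sh} (Hadamard differentiability implies uniform differentiability on compact sets of directions), you derive $\Phi(s)=Bs+o(|s|)$ directly from condition A via compactness of the unit sphere; the two are equivalent, but yours is self-contained.

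The divergence is at the last step, and there you have put your finger on a genuine gap --- one that is present in the paper as well, hidden behind the words ``Hence we easily get that $T$ is saturated.'' Saturation demands exact solvability $T(P+G(s_n))=\eta_n$, and, as you say, differentiability of $\Phi$ at the single point $s=0$ with invertible derivative does not give it: a Brouwer/degree argument is needed, that argument needs continuity of $\Phi$ on a whole ball, and condition A yields no information at the neighbouring measures $P+G(s)$, whose $T$-values need not lie in $\Theta_0\cup\Theta_1$. The obstruction is not merely technical. In dimension one, the function $\phi(y)=y$ for $y\neq \frac12+\frac1n$, $\phi(\frac12+\frac1n)=\frac12+\frac1n+2^{-n}$, is differentiable at $\frac12$ with derivative $1$, yet its image omits every value $\frac12+\frac1n$; composing such a $\phi$ with a linear functional of $P$ shows that the passage from ``invertible derivative at a preimage of $\eta$'' to ``all nearby values $\eta_n$ are attained by nearby measures'' cannot be made from A at $P$ alone. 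Note also that the paper's remark after condition A concerns an alternative metric-based definition of Hadamard differentiability rather than supplying the continuity you need, so it does not close the hole either, though it is plausibly where the author's tacit extra assumption lives. In short: your writeup is incomplete exactly where you say it is; the paper's proof is incomplete at the same place, silently; and under an added hypothesis of continuity of $T$ along the perturbation family (or in the $\tau$-topology), your Brouwer argument does complete the proof, making your version the more rigorous of the two.
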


{\sl Remark.} Theorems \ref{b21} and \ref{5a} together with Theorems \ref{b2} and \ref{b1} allow to point out the conditions for existence of consistent and pointwise consistent tests.  In these setups  the distances can be different for each pairs of sets $\Theta_0, \Theta_{1i}$ and $\Theta_{0i} , \Theta_{1i}$ respectively in the conditions for existence of consistent and pointwise consistent tests. Here the sets $\Theta_0, \Theta_{1i}$ and $\Theta_{0i} , \Theta_{1i}$ are the same as in Theorems \ref{b2} and \ref{b1}. Since these statements are rather evident and are rather cumbersome we omit them. For  discernibility of hypotheses similar necessary and sufficient conditions were pointed out Dembo and Peres (Theorem 1 i., \cite{de}) for hypothesis testing on a sample mean. These are the only necessary conditions obtained for the setup of this subsection.

There are natural topological  extensions of Theorem \ref{b21} on the problems of consistent and pointwise consistent hypotheses testing. If the estimator is uniformly consistent on the closed set of hypotheses and consistent on the open set of alternatives and these sets are disjoint, then there are consistent tests. Pfanzagl (Lemma 8.1, \cite{pf}) has proved similar statement for strongly consistent estimators and strongly consistent tests. This statement is obtained by easy modification of Pfanzagl reasoning. If the disjoint sets of hypotheses and alternatives are open and there is consistent estimator on these sets, then there are pointwise consistent tests. A version of this statement for discernible tests one can find in Dembo and Peres (Theorem 1 ii., \cite{de}).

\section{Other problems of hypotheses testing}
 
\subsection{Hypothesis testing on Gaussian measure}

 Let $X_1,\ldots,X_n$ be i.i.d. Gaussian random vectors in separable Hilbert space $H$. Denote $S = E X_1$ and let $R$ be covariance operator of $X_1$.

 The problem is to test the hypothesis $S \in \Theta_0 \subset H$ versus alternative $S \in \Theta_1 \subset H$.

 \begin{thm}\label{tg1} Let the sets $\Theta_0$  and $\Theta_1$ be relatively compact. Then the hypothesis $H_0$ and alternative $H_1$ are distinguishable, iff the closures of $\Theta_0$  and $\Theta_1$ are disjoint. \end{thm}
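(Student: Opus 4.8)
The plan is to push the problem forward from the space of means $H$ into the space $\Lambda$ of probability measures on $H$, and then to read off the answer from the Le Cam--Schwartz criterion already recorded in Theorem \ref{b7}. Let $\iota:H\to\Lambda$ send a mean $S$ to the Gaussian law $N(S,R)$ with the fixed covariance $R$, and put $\mathcal P_0=\iota(\Theta_0)$, $\mathcal P_1=\iota(\Theta_1)$. Since the joint law of the sample under the mean $S$ is $N(S,R)^{\otimes n}$ and $\iota$ is injective (the mean is determined by the measure), testing $S\in\Theta_0$ against $S\in\Theta_1$ is literally the problem of testing $\mathcal P_0$ against $\mathcal P_1$ for the law of one observation. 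So everything reduces to applying Theorem \ref{b7} to $\mathcal P_0,\mathcal P_1$ in the weak topology $\tau_w$ of $\Lambda$.

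The analytic core is a continuity statement for Gaussian laws with a common covariance: on any norm--relatively--compact set of means one has $N(S_n,R)\to N(S,R)$ in $\tau_w$ if and only if $\|S_n-S\|\to0$. The forward implication is clear. For the converse I would note that $\tau_w$--convergence forces the characteristic functionals $\exp\bigl(i\langle S_n,t\rangle-\tfrac12\langle Rt,t\rangle\bigr)$ to converge pointwise, hence $\langle S_n,t\rangle\to\langle S,t\rangle$ for each $t$, i.e. weak convergence of the means in $H$; Prokhorov tightness of a $\tau_w$--convergent sequence together with the common trace--class covariance $R$ keeps the means in a norm--compact set, and a weakly convergent, norm--precompact sequence converges in norm. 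These are precisely the weak--convergence theorems for Gaussian measures (Bogachev \cite{bog}). It follows that, restricted to relatively compact sets, $\iota$ is a homeomorphism onto its image, carrying norm--compacta to $\tau_w$--compacta and, by injectivity, carrying the norm closures of $\Theta_0,\Theta_1$ onto the $\tau_w$--closures of $\mathcal P_0,\mathcal P_1$. In particular $\overline{\Theta_0}\cap\overline{\Theta_1}=\emptyset$ in norm exactly when the $\tau_w$--closures of $\mathcal P_0$ and $\mathcal P_1$ are disjoint.

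Granting this lemma, the sufficiency is immediate from Theorem \ref{b7}(i): relative compactness of $\Theta_0$ makes $\mathcal P_0$ relatively compact in $\tau_w$, and disjoint $\tau_w$--closures give distinguishability. A self--contained alternative is available as well: $E_S\|\bar X_n-S\|^2=n^{-1}\operatorname{tr}R\to0$ uniformly in $S$, so the norm metric on $H$ is uniformly consistent on $\Theta_0\cup\Theta_1$; two disjoint compacta lie at positive distance, and Theorem \ref{b21} then yields distinguishability.

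The necessity is where I expect the real work. If the norm closures meet at a common $S$, then $N(S,R)$ sits in both $\tau_w$--closures, and I would derive indistinguishability from the converse half of the criterion. The difficulty is exactly the phenomenon flagged after Theorem \ref{seq1}: the map $P\mapsto P^{\otimes n}$ is in general not continuous for setwise convergence, which is why Theorem \ref{b7}(ii) carries a compactness hypothesis and why one cannot simply quote it. The feature that should rescue the Gaussian case is that $\tau_w$--convergence \emph{is} stable under products---$N(S_k,R)\to N(S,R)$ in $\tau_w$ implies $N(S_k,R)^{\otimes n}\to N(S,R)^{\otimes n}$ in $\tau_w$---so the weak--topology form of the argument needs no setwise compactness, and the meeting of the $\tau_w$--closures should propagate to the tensorized experiments and force $\alpha(K_n)+\beta(K_n)\ge1$ for every test. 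Making this propagation rigorous---passing from the coincidence of weak limits in the one--observation experiment to indistinguishability of the $n$--fold experiments, and checking that the separating functions produced by Proposition \ref{pr1} can be taken in the class governing $\tau_w$---is the step I expect to be the main obstacle; the continuity lemma of the second paragraph, by contrast, is routine.
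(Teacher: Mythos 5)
Your sufficiency argument is correct and in substance is the paper's: the reduction $S\mapsto N(S,R)$ followed by Theorem \ref{b7} {\sl i} is exactly the Le Cam--Schwartz route, with your continuity lemma playing the role of Bogachev's Theorem 3.7.7 \cite{bog}, and your alternative via the sample mean and Theorem \ref{b21} is a valid self-contained substitute. The gap is the necessity direction, which you explicitly leave open, and it is not mere unfinished bookkeeping: the route you propose cannot be completed. Indistinguishability in this paper means $\alpha(K)+\beta(K)\ge 1$ for \emph{arbitrary measurable} tests, and integrals of bounded measurable functions are controlled by setwise ($\tau$-) convergence, not by weak convergence; this is exactly why Theorem \ref{seq1} is stated for the sequential $\tau$-closure. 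So stability of weak convergence under products is beside the point: $N(S_k,R)^{\otimes n}\to N(S,R)^{\otimes n}$ weakly constrains only tests that respect weak limits, and a measurable test need not.

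Worse, the implication you hope to force is false in infinite dimensions. Since $R$ is trace class, $R^{1/2}$ is compact, so $\frak{R}(R^{1/2})\ne H$; pick $b\in H\setminus \frak{R}(R^{1/2})$ and set $\Theta_0=\{0\}$, $\Theta_1=\{k^{-1}b:\,k\ge 1\}$. Both sets are relatively compact and their norm closures meet at $0$, yet by the Feldman--Hajek/Cameron--Martin dichotomy each $N(k^{-1}b,R)$ is mutually singular with respect to $N(0,R)$; choosing Borel sets $D_k$ with $N(k^{-1}b,R)(D_k)=1$ and $N(0,R)(D_k)=0$ and putting $B=\cup_k D_k$, the one-observation test $K=1_B(X_1)$ has $\alpha(K)=\beta(K)=0$. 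Thus norm convergence of means gives weak convergence of the Gaussian laws but does not give indistinguishability, and no propagation argument through the weak topology can repair this. You should also note that the same example probes the paper's own one-line proof, which rests on the claim that on tight families the $\tau$-topology and the weak topology coincide: the family $\{N(k^{-1}b,R)\}\cup\{N(0,R)\}$ is tight and weakly convergent but not setwise convergent, so the hypothesis of Theorem \ref{b7} {\sl ii} (relative $\tau$-compactness) is not actually secured by tightness. In short, your instinct about where the difficulty sits is accurate, but the difficulty is substantive rather than technical: the necessity half requires an additional hypothesis keeping the differences of means inside the Cameron--Martin space $\frak{R}(R^{1/2})$ (so that weak and setwise limits agree), and neither your argument nor the cited one supplies it.
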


 Proof. If the set of probability measures is tight, then the $\tau$-topology and weak topology coincide. Implementing Le Cam -Schwartz Theorem and Theorem 3.7.7, Bogachev \cite{bog} on weak convergence of probability measures we get Theorem  \ref{tg1}.

 Let $\Theta$ be tight set of centered Gaussian measures  (on tightness criteria see Theorem 3.7.10 in Bogachev \cite{bog}). Denote $\Psi$ - the set of their covariance operators. The problem is to test the hypothesis $H_0: R \in \Psi_0 \subset \Psi$ versus alternative $H_1: R \in \Psi_1 \subset \Psi$.

For $i=1,2$, define the sets $\Upsilon_i = \{ R^{1/2}:
 R \in \Psi_i\}.$

 \begin{thm}\label{tg2} The hypothesis $H_0$ and alternative $H_1$ are distinguishable, iff the closures of $\Upsilon_0$  and $\Upsilon_1$ in Hilbert - Schmidt norm are disjoint.  \end{thm}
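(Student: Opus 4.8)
The plan is to reduce Theorem~\ref{tg2} to the distinguishability criterion of Theorem~\ref{b7}, exactly as in the proof of Theorem~\ref{tg1}, and then to identify the weak topology on the relevant family of Gaussian measures with the Hilbert--Schmidt topology on the square roots of the covariance operators. Write $\mu_R = N(0,R)$ for the centered Gaussian measure with covariance $R$, and set $\Theta_i = \{\mu_R : R \in \Psi_i\}$ for $i=0,1$. Since the full family $\Theta$ is tight, the sets $\Theta_0,\Theta_1$ are relatively compact, and because $H$ is a metric space the weak topology and the $\tau$-topology coincide on them (Ganssler \cite{ga}, Lemma~2.3). Thus both parts of Theorem~\ref{b7} are available, and $H_0,H_1$ are distinguishable if and only if the weak closures of $\Theta_0$ and $\Theta_1$ are disjoint. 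It therefore remains only to translate disjointness of weak closures of measures into disjointness of Hilbert--Schmidt closures of $\Upsilon_0,\Upsilon_1$.

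The heart of the argument is the claim that $R^{1/2}\mapsto \mu_R$ is a homeomorphism from $\Upsilon=\{R^{1/2}:R\in\Psi\}$, with the Hilbert--Schmidt norm, onto $\{\mu_R:R\in\Psi\}$ with the weak topology; equivalently, $\mu_{R_n}\to\mu_R$ weakly iff $\|R_n^{1/2}-R^{1/2}\|_{HS}\to 0$. First I would record that Hilbert--Schmidt convergence of the square roots is the same as trace-norm convergence of the covariances: the Powers--Stormer inequality gives
$$
\|R_n^{1/2}-R^{1/2}\|_{HS}^2 \le \|R_n - R\|_1,
$$
while conversely $\|R_n-R\|_1\to 0$ is forced by $\|R_n^{1/2}-R^{1/2}\|_{HS}\to 0$ since a product of Hilbert--Schmidt operators is trace class. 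Next I would invoke the weak-convergence criterion for centered Gaussian measures (Bogachev \cite{bog}): $\mu_{R_n}\to\mu_R$ weakly iff the characteristic functionals converge pointwise, i.e. $\langle R_n x,x\rangle\to\langle R x,x\rangle$ for all $x$ (weak operator convergence), together with the tightness/trace condition $\mathrm{tr}\,R_n\to\mathrm{tr}\,R$. The main obstacle is precisely the forward implication: weak operator convergence \emph{alone} does not yield trace-norm convergence, as the rank-one example $R_n=e_n\otimes e_n$ shows (there $R_n\to 0$ weakly as operators, $\|R_n^{1/2}\|_{HS}=1$, and $\mu_{R_n}$ does not converge weakly). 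The content is thus to upgrade weak operator convergence supplemented by convergence of traces to genuine trace-norm convergence; this is a Scheffe-type argument applied to the nonnegative diagonal entries $\langle R_n e_k,e_k\rangle$ in an eigenbasis, whose termwise convergence together with convergence of their sums $\mathrm{tr}\,R_n$ gives $\ell^1$ convergence, which combined with weak operator convergence boosts to $\|R_n-R\|_1\to 0$.

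Finally I would assemble the conclusion. Because a weak limit of centered Gaussian measures is again a centered Gaussian measure whose covariance is the limiting covariance, the homeomorphism of the previous paragraph identifies the weak closure of $\Theta_i$ with $\{\mu_R : R^{1/2}\in\mathrm{cl}_{HS}(\Upsilon_i)\}$. Hence the weak closures of $\Theta_0$ and $\Theta_1$ are disjoint if and only if the Hilbert--Schmidt closures of $\Upsilon_0$ and $\Upsilon_1$ are disjoint. Combining this with the distinguishability criterion obtained from Theorem~\ref{b7} in the first paragraph yields the assertion of Theorem~\ref{tg2}.
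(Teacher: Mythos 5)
Your proposal is correct and follows essentially the same route as the paper: reduce to Theorem \ref{b7} using tightness (so the $\tau$-topology and the weak topology coincide on the relevant sets), then translate weak convergence of centered Gaussian measures into Hilbert--Schmidt convergence of the square roots of their covariance operators. The only difference is that where the paper simply cites Bogachev's weak-convergence criterion for centered Gaussian measures (Th.\ 3.7.10 in \cite{bog}), you re-derive that criterion by hand via the Powers--St{\o}rmer inequality and a Scheff\'e/Dell'Antonio-type argument upgrading weak operator convergence plus trace convergence to trace-norm convergence.
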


 Proof of Theorem \ref{tg2} differs from the proof of Theorem \ref{tg1}  only implementation of another criteria of weak convergence of centered Gaussian measures   (see Th 3.7.10, Bogachev \cite{bog}).

\subsection{Signal detection  in $L_2$ \label{s2.2}}

 Suppose we  observe a realization of stochastic process $Y_\epsilon(t), t \in (0,1)$, defined by the stochastic differential equation
$$
dY_\epsilon(t) = S(t) dt + \epsilon dw(t), \quad \epsilon > 0
$$
where $S\in L_2(0,1)$ is unknown signal and $dw(t)$ is Gaussian white noise.

We wish to test a hypothesis $H_0: S\in\Theta_0 \subset L_2(0,1)$ versus alternative $H_1: S\in\Theta_1 \subset L_2(0,1)$.

The results are provided in terms of the weak topology in $L_2(0,1)$.

\begin{thm}\label{si1} {\sl i.} Let $\Theta_0$ be bounded set in $L_2$. Then $H_0$ and $H_1$ are distinguishable iff  the closures of $\Theta_0$ and $\Theta_1$ are disjoint.

{\sl ii}. The converse holds if $\Theta_1$ is also bounded sets in $L_2$.
\end{thm}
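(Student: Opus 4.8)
The plan is to transcribe the problem into the sequence space model and then run arguments parallel to Proposition \ref{pr1} and Theorem \ref{tg1}, with the role of the $\tau$-topology now played by the weak topology of $L_2(0,1)$. Fixing an orthonormal basis $\{e_k\}$ of $L_2(0,1)$ and writing $\theta_k = \langle S, e_k\rangle$, the observation is equivalent to $y_k = \theta_k + \epsilon\xi_k$ with $\xi_k$ i.i.d. standard Gaussian, so that for every $\phi \in L_2(0,1)$ the linear functional $\langle S,\phi\rangle$ is estimated by $\int_0^1 \phi\,dY_\epsilon = \langle S,\phi\rangle + \epsilon\int_0^1\phi\,dw$ with a centred Gaussian error of variance $\epsilon^2\|\phi\|^2$ whose law does not depend on $S$. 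This is the exact analogue of the relative-frequency statistics used after Theorem \ref{b7}, and it is the only probabilistic input needed.

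For the sufficiency in {\sl i.} I would first note that, $\Theta_0$ being bounded in the Hilbert space $L_2(0,1)$, its weak closure is weakly compact, while the weak closure of $\Theta_1$ is weakly closed and, by hypothesis, disjoint from it. Covering the compact set $\mathrm{cl}_w(\Theta_0)$ by finitely many basic weak neighbourhoods each avoiding $\mathrm{cl}_w(\Theta_1)$, I collect the finitely many directions $\phi_1,\ldots,\phi_m$ occurring in them and pass to the projection $\Pi_m S = (\langle S,\phi_1\rangle,\ldots,\langle S,\phi_m\rangle) \in R^m$. Then $\Pi_m(\mathrm{cl}_w(\Theta_0))$ is a compact subset of an open set of $R^m$ whose complement contains $\Pi_m(\Theta_1)$, so the two images are separated by a positive gap $\gamma$. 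Estimating the $m$ functionals by $\hat v = (\int\phi_i\,dY_\epsilon)_i = \Pi_m S + \epsilon N$, with $N$ a fixed Gaussian vector independent of $S$, and accepting $H_0$ exactly when $\hat v$ lies within $\gamma/2$ of the closure of $\Pi_m(\Theta_0)$, yields a test whose error probabilities are bounded by $P(\epsilon|N| > \gamma/2)$ and hence tend to $0$ uniformly as $\epsilon\to 0$. This gives distinguishability, and notably does not use boundedness of $\Theta_1$.

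For the converse in {\sl ii.} I would argue by contraposition through Kraft's bound (Theorem \ref{t5}). If the weak closures meet at a common point $S^*$, choose $S_0^{(n)}\in\Theta_0$ and $S_1^{(n)}\in\Theta_1$ converging weakly to $S^*$; the perturbations $D^{(n)} = S^{(n)} - S^*$ are bounded and converge weakly to $0$, so by a greedy extraction they may be taken almost orthogonal, $\langle D^{(n_i)},D^{(n_j)}\rangle\to 0$ for $i\ne j$. A direct computation of the $\chi^2$-divergence between the uniform mixture $J^{-1}\sum_{i\le J}P_{S^*+D^{(n_i)}}^\epsilon$ and $P_{S^*}^\epsilon$ reduces to $J^{-2}\sum_{i,j}\exp(\langle D^{(n_i)},D^{(n_j)}\rangle/\epsilon^2)$, which tends to $1$ as $J\to\infty$; hence such mixtures approach $P_{S^*}^\epsilon$ in total variation. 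Doing this on both sides shows that the convex hulls of the Gaussian families over $\Theta_0$ and over $\Theta_1$ have variational distance zero, so Theorem \ref{t5} forces $\alpha(K_\epsilon)+\beta(K_\epsilon)\ge 1$ for every test and every $\epsilon$, i.e. indistinguishability.

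The main obstacle is this last mixture construction: unlike Theorem \ref{tg1}, the white-noise measures $P_S^\epsilon$ have the non-trace-class covariance $\epsilon^2 I$ and are not tight Gaussian measures on $L_2(0,1)$, so I cannot simply invoke a weak-convergence criterion for Gaussian measures. I would handle this by carrying out the $\chi^2$-computation on finite-dimensional projections and passing to the limit, the Cameron--Martin theorem guaranteeing that shifts by $L_2$-elements keep the relevant likelihood ratios well defined; this is the rigorous form of the Ibragimov--Hasminskii indistinguishability of the centre and the interior of an $L_2$-ball, now read off from the mere intersection of weak closures. The separation step in {\sl i.}, being a finite-dimensional compactness argument, is routine by comparison.
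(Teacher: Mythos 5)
Your proposal is correct. For part {\sl i.} you follow essentially the paper's own route: weak compactness of the bounded set $\Theta_0$ (Tikhonov/Banach--Alaoglu), a finite covering of $\mathrm{cl}_w(\Theta_0)$ by basic weak neighbourhoods disjoint from $\mathrm{cl}_w(\Theta_1)$, and a test built from the finitely many linear statistics $\int\phi_i\,dY_\epsilon$, whose Gaussian error law does not depend on $S$; your gap-$\gamma$ acceptance rule is just a more explicit rendering of the paper's ``uniformly consistent tests using statistics $\int f_{il}(s)(dY_\epsilon(s)-S_{il}(s)\,ds)$'', and you are right that this direction uses boundedness of $\Theta_0$ only.

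Part {\sl ii.} is where you genuinely diverge from the paper. The paper proves necessity by applying the injective diagonal operator $A$, $(As)_j=s_j/j$, to the sequence-space form of the observation: this makes the noise covariance trace class, so the transformed observations are tight Gaussian measures on $l_2$; Theorem \ref{tg1} (which rests on the Le Cam--Schwartz theorem \cite{les} and Bogachev's weak-convergence criteria for Gaussian measures \cite{bog}) then applies, and disjointness of the norm closures of the images $A\Psi_i$ is translated back into disjointness of the weak closures of $\Theta_0,\Theta_1$. You instead argue directly: a common point $S^*$ of the two weak closures is a weak sequential limit from both sides (metrizability of the weak topology on bounded sets --- the first place boundedness of $\Theta_1$ enters); a greedy extraction makes the perturbations $D^{(n_i)}$ pairwise almost orthogonal; for the uniform mixture $\bar P=J^{-1}\sum_{i\le J}P^\epsilon_{S^*+D^{(n_i)}}$ the exact Cameron--Martin identity $E_{P^\epsilon_{S^*}}\bigl[(d\bar P/dP^\epsilon_{S^*})^2\bigr]=J^{-2}\sum_{i,j}\exp\bigl(\langle D^{(n_i)},D^{(n_j)}\rangle/\epsilon^2\bigr)$ is rigorous because each likelihood ratio depends on only finitely many Gaussian functionals; the diagonal part vanishes as $J\to\infty$ because the $\|D^{(n_i)}\|$ are bounded (the second place boundedness enters), so $\bar P$ approaches $P^\epsilon_{S^*}$ in variation, and Kraft's bound (Theorem \ref{t5}) gives $\alpha(K)+\beta(K)\ge 1$ for every test at every fixed $\epsilon$. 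The trade-off: the paper's reduction is short given Theorem \ref{tg1} and is recycled verbatim for the ill-posed and heteroscedastic problems (Theorems \ref{t21} and \ref{t3}); your argument is self-contained, needs no tightness or weak-convergence theory for Gaussian measures, yields indistinguishability in the paper's strict sense (error sum at least one at every noise level, not merely failure of uniform consistency), and exhibits Theorem \ref{si1} {\sl ii.} as the natural generalization of the Ibragimov--Hasminskii ball argument \cite{ih} from the centre of an $L_2$-ball to an arbitrary common weak cluster point.
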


\begin{thm}\label{si121}
{\sl i.} Let $\Theta_0$  be bounded set in $L_2$. Then there are consistent tests iff  $\Theta_0$ and $\Theta_1$  are contained  in disjoint closed set and $F_\sigma$- set respectively.

{\sl ii.} There are point-wise consistent tests iff the sets $\Theta_0$ and $\Theta_1$ are contained in disjoint $F_\sigma$-sets.
\end{thm}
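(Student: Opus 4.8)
The plan is to deduce Theorem \ref{si121} from the distinguishability criterion of Theorem \ref{si1} together with the signal-detection analogues of the reduction Theorems \ref{b2} and \ref{b1}, which (as noted just after those theorems) hold in the present setup by an easy modification of the reasoning. Throughout, ``closed'' and ``$F_\sigma$'' are meant in the weak topology of $L_2(0,1)$; I would use that every norm ball $B_r=\{S:\|S\|\le r\}$ is weakly closed, and that whenever a set $A$ lies in a weakly closed set $C$ its weak closure $\overline{A}^w$ lies in $C$ as well.

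For part {\sl i.}, sufficiency, I would assume $\Theta_0\subseteq C$ and $\Theta_1\subseteq F=\cup_i F_i$ with $C$ weakly closed, each $F_i$ weakly closed and (without loss) nested, and $C\cap F=\emptyset$. Setting $\Theta_{1i}=\Theta_1\cap F_i$ gives a nested family with union $\Theta_1$ and with $\overline{\Theta_{1i}}^w\subseteq F_i$ disjoint from $\overline{\Theta_0}^w\subseteq C$; since $\Theta_0$ is bounded, Theorem \ref{si1} makes each pair $H_0,H_{1i}$ distinguishable, and the analogue of Theorem \ref{b2} yields consistent tests. For necessity I would start from a consistent sequence $K_i$ and mimic the proof of Theorem \ref{b2}, but build the exhausting family to be \emph{bounded}: fixing $\alpha+\beta<1$, put $\Theta'_{1i}=\{S\in\Theta_1:\alpha(K_i)<\alpha,\ \beta_S(K_i)\le\beta,\ \|S\|\le i\}$ and $\Theta_{1i}=\cup_{j\le i}\Theta'_{1j}$. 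Because each $S\in\Theta_1$ has finite norm and $\beta_S(K_i)\to0$, these are nested, bounded, exhaust $\Theta_1$, and are distinguishable from $H_0$ (weak distinguishability via $K_i$, then Lemma \ref{l33}). As both $\Theta_0$ and $\Theta_{1i}$ are now bounded, the converse half of Theorem \ref{si1} gives $\overline{\Theta_0}^w\cap\overline{\Theta_{1i}}^w=\emptyset$, so $C=\overline{\Theta_0}^w$ and the $F_\sigma$-set $F=\cup_i\overline{\Theta_{1i}}^w$ are the required disjoint pair.

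For part {\sl ii.} the argument is symmetric in $\Theta_0,\Theta_1$ and rests on the analogue of Theorem \ref{b1}. For sufficiency, writing $\Theta_0\subseteq G_0=\cup_i G_{0i}$ and $\Theta_1\subseteq G_1=\cup_i G_{1i}$ with $G_{0i},G_{1i}$ nested weakly closed and $G_0\cap G_1=\emptyset$, I would take $\Theta_{0i}=\Theta_0\cap G_{0i}\cap B_i$ and $\Theta_{1i}=\Theta_1\cap G_{1i}$. Intersecting with $B_i$ makes $\Theta_{0i}$ bounded while keeping nestedness and exhaustion, and $\overline{\Theta_{0i}}^w\subseteq G_{0i}$, $\overline{\Theta_{1i}}^w\subseteq G_{1i}$ are disjoint, so Theorem \ref{si1} distinguishes each $H_{0i},H_{1i}$ and the analogue of Theorem \ref{b1} supplies pointwise consistent tests. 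For necessity I would again bound \emph{both} families by intersecting the sets produced in the proof of Theorem \ref{b1} with $B_i$, so the converse half of Theorem \ref{si1} applies to each pair $H_{0i},H_{1i}$; the crucial point is global disjointness of $F_0=\cup_i\overline{\Theta_{0i}}^w$ and $F_1=\cup_i\overline{\Theta_{1i}}^w$, which I would get from nestedness: for any $i,j$ with $m=\max(i,j)$ one has $\overline{\Theta_{0i}}^w\subseteq\overline{\Theta_{0m}}^w$ and $\overline{\Theta_{1j}}^w\subseteq\overline{\Theta_{1m}}^w$, and these are disjoint by Theorem \ref{si1}, whence $F_0\cap F_1=\emptyset$.

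The main obstacle is the boundedness requirement hidden in the converse half of Theorem \ref{si1}: the weak closure of an unbounded set need not be weakly compact, and the indistinguishability lower bound underlying that converse relies on extracting weakly convergent sequences, available only on bounded (hence weakly metrizable) sets. The device that overcomes this, namely intersecting the exhausting families with the norm balls $B_i$ (legitimate since every signal has finite norm, so the balls still exhaust), is what permits the clean weak-topology characterization with no boundedness hypothesis on $\Theta_1$ in part {\sl i.} nor on either set in part {\sl ii.} A secondary point I would verify is that distinguishability is inherited by subsets of the alternative (and, by symmetry, of the hypothesis) and is stable under the finite unions handled by Lemma \ref{l33}, so that the modified families remain distinguishable from their counterparts.
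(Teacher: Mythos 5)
Your skeleton --- Theorem \ref{si1} applied to each pair of an exhausting family, plus signal-detection analogues of Theorems \ref{b2} and \ref{b1} --- is the same as the paper's, and your ball-intersection device $\Theta_{1i}\cap B_i$ for coping with unbounded sets is sensible bookkeeping that the paper leaves implicit. The genuine gap is that you take those analogues for granted: you assert they hold ``by an easy modification of the reasoning,'' citing the remark in Section 3, but that remark refers to the setup of subsection 5.1 (i.i.d. Gaussian vectors, discrete sample size $n$), not to the white-noise model, which is indexed by a continuous parameter $\epsilon>0$. The proofs of Theorems \ref{b2} and \ref{b1} rest on machinery specific to i.i.d. sampling: the step ``weak distinguishability implies distinguishability'' (the Hoeffding--Wolfowitz multinomial reduction) and Lemma \ref{l33} (whose proof uses uniformly consistent estimation of cell probabilities on product spaces, together with the exponential bounds (\ref{x1})). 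Your necessity arguments invoke exactly these two ingredients (``weak distinguishability via $K_i$, then Lemma \ref{l33}''), and neither is available off the shelf when the observation is $dY_\epsilon=S\,dt+\epsilon\,dw$.

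The paper's proof is devoted precisely to closing this gap, and that is where its actual content lies: by Theorem \ref{si1}, distinguishability of bounded sets is achieved by uniformly consistent tests depending on finitely many linear statistics $\int S_j(t)\,dY_\epsilon(t)$; restricting to the discrete values $\epsilon=\epsilon_n=Cn^{-1/2}$ makes the experiment equivalent (via sufficiency of the average) to $n$ i.i.d. observations at fixed noise level $C$, so versions of Theorems \ref{b2} and \ref{b1} can be proved for these values; this yields Theorem \ref{si121} along the sequence $\epsilon_n$, giving necessity, and since the tests constructed for the $\epsilon_n$ work for arbitrary $\epsilon>0$, sufficiency follows. Note that your sufficiency halves, which apply Theorem \ref{si1} {\sl i} to each pair and then diagonalize over a sequence $\epsilon_i\downarrow 0$, do survive without any i.i.d. machinery; but your necessity halves need either the paper's discretization argument or a strengthening of Theorem \ref{si1} {\sl ii} to an indistinguishability statement ($\alpha+\beta\ge 1$ for every test and every $\epsilon$ when the weak closures meet), which would contradict weak distinguishability directly and bypass Lemma \ref{l33} altogether, finite unions of closures then handling the nesting. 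As written, the proposal leaves the central step unproved.
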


\begin{proof}[ Proof of Theorem \ref{si1} { \sl i}]
 By Tikhonov theorem, $\Theta_0$ is relatively compact. Therefore $\Theta_0$ and $\Theta_1$ have disjoint finite covering by the sets
$$U_l = \cap_{i=1}^{k_l} U_{il}, \quad U_{il} = U(f_{il},c_{il},S_{il}) =\{ S: \int f_{il}(s)(S(s)- S_{il}(s)) ds \le c_{il}, S \in L_2\},
$$
with $f_{il}, S_{il}\in L_2, 1 \le i \le k_l, 1 \le l \le m$. Thus we can define uniformly consistent tests using statistics $\int f_{il}(s)(dY_\epsilon(s)- S_{il}(s) ds )$.    \end{proof}

\begin{proof}[ Proof of Theorem \ref{si121}] Theorem \ref{si1}  implies that the
bounded sets $\Theta_0$ and $\Theta_1$ are distinguishable only if there are uniformly consistent tests depending on a finite number of linear statistics
$$
\int S_1(t) dY_\epsilon(t), \ldots, \int S_k(t) dY_\epsilon(t)
$$
with $S_1,\ldots,S_k \in L_2(0,1)$.

This allows to prove versions of Theorems \ref{b2} and \ref{b1} for discrete values of parameter $\epsilon = \epsilon_n =Cn^{-1/2}$ and to obtain  Theorem \ref{si121} for such values of parameter $\epsilon = \epsilon_n$. Hence the necessary conditions follow. The test statistics constructed for the parameters $\epsilon_n$ work for arbitrary $\epsilon > 0$.  This implies the sufficiency.\end{proof}
\subsection{Hypothesis testing on a solution of linear ill-posed problem \label{s2.4}}
In Hilbert space $H$
  we wish to test a hypothesis on a  vector $\theta \in \Theta \subset H$ from
  the observed  Gaussian random vector
 \begin{equation}\label{il1}
  Y = A\theta + \epsilon \xi, \quad \epsilon > 0.
  \end{equation}
  Hereafter $A: H \to H$ is known  linear operator and $\xi$ is Gaussian random vector having known covariance operator $R: H \to H$ and
  $E\xi = 0$.

For any operator $U: H \to H$ denote $\frak{R}(U)$ the rangespace of $U$.

  Suppose that the nullspaces of $A$ and $R$ equal  zero and $\frak{R}(A) \subseteq \frak{R}(R^{1/2})$.

\begin{thm}\label{t21} Let the operator $R^{-1/2}A$ be bounded. Then the statements  of Theorems \ref{si1} and \ref{si121} hold for the weak topology in $H$.\end{thm}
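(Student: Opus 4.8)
The plan is to reduce the ill-posed model (\ref{il1}) to the signal detection model of Subsection \ref{s2.2} by the whitening operator $B=R^{-1/2}A$, and then to transport the weak-topology conditions from the signal space back to the parameter space. Since $\ker A=\{0\}$, $\ker R=\{0\}$ and $\frak{R}(A)\subseteq\frak{R}(R^{1/2})$, the operator $B=R^{-1/2}A:H\to H$ is a well-defined bounded injection, and the mean of the observation factorizes as $A\theta=R^{1/2}B\theta$. Hence the Gaussian shift family $\{N(A\theta,\epsilon^2R):\theta\in\Theta\}$ generated by $Y=A\theta+\epsilon\xi$ is equivalent to the white-noise family $\{N(B\theta,\epsilon^2 I):\theta\in\Theta\}$: the corresponding likelihood ratios depend on two parameters $\theta,\theta'$ only through $R^{-1/2}A(\theta-\theta')=B(\theta-\theta')$, which is precisely the separation governing the model $dY_\epsilon=(B\theta)\,dt+\epsilon\,dw$ of Subsection \ref{s2.2}. (I would justify this equivalence by the equivalence/singularity dichotomy for Gaussian shifts, e.g. Bogachev \cite{bog}.) Under this equivalence the hypotheses $\theta\in\Theta_i$ turn into the signal hypotheses $B\theta\in B\Theta_i$, $i=0,1$, and distinguishability, the existence of consistent tests and the existence of pointwise consistent tests are preserved. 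Thus Theorems \ref{si1} and \ref{si121}, applied to the signal sets $B\Theta_0,B\Theta_1$, reduce the whole statement to the claim that the weak separation conditions for $B\Theta_0,B\Theta_1$ in the signal space coincide with the stated weak-topology conditions for $\Theta_0,\Theta_1$ in $H$.

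The device that transports these conditions is the invariance of the weak topology under the bounded bijection $B$. Being bounded, $B$ is continuous from $(H,\mathrm{weak})$ to $(H,\mathrm{weak})$, so $B\Theta_i$ is bounded whenever $\Theta_i$ is, and the boundedness hypotheses of Theorems \ref{si1} and \ref{si121} transfer verbatim to the signal sets. First I would treat the sets on which a boundedness assumption is imposed: if $\Theta_i$ is bounded then, $H$ being reflexive, its weak closure is weakly compact, and the restriction of $B$ to this set is a continuous injection of a weakly compact space into a Hausdorff space, hence a weak homeomorphism onto its image. Consequently the weak closure of $B\Theta_i$ equals $B$ applied to the weak closure of $\Theta_i$, and two such images have disjoint weak closures in the signal space if and only if $\Theta_0,\Theta_1$ have disjoint weak closures in $H$. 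Feeding this equivalence into Theorems \ref{si1} and \ref{si121} yields the asserted statements phrased through the weak topology of $H$.

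For the direction producing tests I would re-run the covering argument of the proof of Theorem \ref{si1} \textsl{i.}, but with the separating functionals taken from $\frak{R}(A^*)$. The linear statistics available for (\ref{il1}) are exactly $\langle g,Y\rangle=\langle A^*g,\theta\rangle+\epsilon\langle g,\xi\rangle$, $g\in H$, so the estimable linear functionals of $\theta$ are the elements $f\in\frak{R}(A^*)$, and $\overline{\frak{R}(A^*)}=(\ker A)^\perp=H$, i.e. $\frak{R}(A^*)$ is dense. Since a dense family of functionals induces the full weak topology on the weakly compact set formed by the weak closure of the bounded set $\Theta_0$, that set can be covered by finitely many basic weak neighborhoods built from functionals $f_{il}=A^*g_{il}\in\frak{R}(A^*)$ and disjoint from the alternative; using the statistics $\langle g_{il},Y\rangle$ exactly as in the proof of Theorem \ref{si1} \textsl{i.} then yields a uniformly consistent test. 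The consistent and pointwise consistent versions follow by combining this construction with Theorems \ref{b2} and \ref{b1}, just as Theorem \ref{si121} is deduced from Theorem \ref{si1}.

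The main obstacle is precisely this transport of the weak separation across the unbounded inverse $B^{-1}$: because the problem is ill-posed, $B^{-1}$ is unbounded, the weak topology of the parameter space and that of the signal space genuinely differ off bounded sets, and one cannot push weak closures through $B$ in general. The technical heart is therefore the two facts that legitimize the transport on the sets appearing in the hypotheses, namely (a) the weak-homeomorphism property of $B$ on bounded (hence weakly compact) sets, giving the identity between the weak closure of $B\Theta_i$ and the image under $B$ of the weak closure of $\Theta_i$, and (b) the fact that the proper dense subspace $\frak{R}(A^*)$ of admissible functionals still generates the weak topology on the weakly compact closure of $\Theta_0$; both rest on the weak compactness of bounded sets supplied by the reflexivity of $H$.
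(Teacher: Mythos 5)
Your core reduction is exactly the paper's: the paper's entire proof consists of whitening the observation (\ref{il1}) by $R^{-1/2}$, noting that Theorems \ref{si1} and \ref{si121} then apply to the signal sets $B\Theta_0,B\Theta_1$ with $B=R^{-1/2}A$, and ``implementing the inverse map $A^{-1}R^{1/2}$'' to return to $\Theta_0,\Theta_1$. What you add, and what the paper leaves entirely implicit, is the justification of that return trip: your fact (a), that the bounded injection $B$ restricted to a weakly compact (bounded and weakly closed) set is a weak homeomorphism onto its image, so that the weak closure of $B\Theta_i$ is the $B$-image of the weak closure of $\Theta_i$ and disjointness of closures transfers in both directions. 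That part is correct, and it is the honest content of the theorem.

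The gap is in your part (b), where you try to produce tests assuming only $\Theta_0$ bounded, as in the hypothesis of Theorem \ref{si1} {\sl i}. Density of $\frak{R}(A^*)$ in $H$ does give that $\frak{R}(A^*)$-functionals generate the weak topology \emph{on the weakly compact set} formed by the weak closure of $\Theta_0$, but it does not let you choose finitely many such basic neighborhoods covering that set and \emph{disjoint from} $\Theta_1$ when $\Theta_1$ is unbounded: replacing a separating functional $f$ by an approximation $A^{*}g$ costs $|\langle f-A^{*}g,\,S-S_0\rangle|\le \|f-A^{*}g\|\,\|S-S_0\|$, which is uncontrolled over an unbounded alternative --- exactly the unbounded-$B^{-1}$ obstruction you yourself flagged, resurfacing inside the covering argument. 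Moreover, no argument can close this gap, because the statement it aims at is false without boundedness of $\Theta_1$. Take $H=l_2$, $R=A=\mathrm{diag}(j^{-2})$, so that $B=\mathrm{diag}(j^{-1})$ is bounded; let $\Theta_0=\{0\}$ and $\Theta_1=\{k e_k:k\ge 1\}$, with $e_k$ the coordinate vectors. Then $\Theta_0$ is bounded and the weak closures of $\Theta_0,\Theta_1$ in $H$ are disjoint: for $a=(2/j)_{j\ge 1}\in l_2$ one has $\langle a,\,k e_k\rangle=2$ for every $k$, so $0$ is not in the weak closure of $\Theta_1$. Yet $B\Theta_1=\{e_k\}$ converges weakly to $0=B0$, and both $B\Theta_0$ and $B\Theta_1$ are bounded, so by the necessity part of Theorem \ref{si1} applied in the whitened model the hypotheses are indistinguishable; one can also see this directly from Theorem \ref{t5}, since under the null the likelihood ratios of the mixtures $N^{-1}\sum_{k\le N}P_{e_k}$ (here $P_s$ is the law of the whitened observation with signal $s$) are averages of i.i.d. mean-one variables, hence tend to $1$ almost surely, and the variation distance to $P_0$ tends to $0$ for every fixed noise level. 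So the transported Theorem \ref{si1} {\sl i} with unbounded $\Theta_1$ is simply not true: the conclusion of Theorem \ref{t21} is available exactly in the bounded-set regime covered by your fact (a), and part (b) should be dropped rather than repaired. Note that the same lacuna sits inside the paper's own two-sentence proof, which nowhere restricts how the inverse map acts on unbounded sets; in this respect your write-up, by isolating where the transport is legitimate, is more careful than the original.
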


\begin{proof} The  statements   of Theorems \ref{si1} and \ref{si121} hold for the sets of hypotheses $R^{-1/2}A\Theta_0$ and alternatives $R^{-1/2}A\Theta_1$. Thus it suffices to implement the inverse map $A^{-1}R^{1/2}$ to (\ref{il1}).\end{proof}

The problem of  signal  detection in the heteroscedastic Gaussian white noise can be considered as a particular case of  Theorem \ref{t21}.

Let we observe a random process $Y(t), t \in (0,1),$ defined the stochastic differential equation
$$
dY(t) = S(t) dt + \epsilon h(t) dw(t), \quad \epsilon > 0
$$
where $S\in L_2(0,1)$ is unknown signal, $h(t)$ is a weight function.

One needs  to test a hypothesis on a signal $ S$.
\begin{thm}\label{t3} Let $0< c <  h(t) < C < \infty$ for all $t \in (0,1)$. Then the statements  of Theorems \ref{si1} and \ref{si121} hold for the weak topology in $L_2(0,1)$.\end{thm}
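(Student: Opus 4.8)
The plan is to realize the heteroscedastic observation as the special instance $A = I$, $R = M_{h^2}$ of the linear ill-posed model (\ref{il1}) and then simply invoke Theorem \ref{t21}. Here $M_g$ denotes the multiplication operator $(M_g\phi)(t) = g(t)\phi(t)$ on $L_2(0,1)$. Indeed, the noise $\epsilon h(t)\,dw(t)$ acts on a test function $\phi \in L_2(0,1)$ as the Gaussian variable $\epsilon\int \phi(t) h(t)\,dw(t)$, whose variance is $\epsilon^2\int \phi^2 h^2\,dt$; its (formal) covariance operator is therefore $R = M_{h^2}$, while the signal enters through $A = I$. With this identification the model $dY(t) = S(t)\,dt + \epsilon h(t)\,dw(t)$ is exactly (\ref{il1}) with $\theta = S$.

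Next I would verify the hypotheses of Theorem \ref{t21}. Since $0 < c < h(t) < C < \infty$, the function $h^2$ is bounded away from $0$ and $\infty$, so $R = M_{h^2}$ has trivial nullspace, $R^{1/2} = M_h$, and $\frak{R}(R^{1/2}) = \frak{R}(M_h) = L_2(0,1)$; as $A = I$ also has trivial nullspace and $\frak{R}(A) = L_2(0,1) \subseteq \frak{R}(R^{1/2})$, the standing assumptions hold. Moreover $R^{-1/2}A = M_{1/h}$ is bounded, with $\|M_{1/h}\phi\| \le c^{-1}\|\phi\|$, which is precisely the extra condition required in Theorem \ref{t21}. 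Equivalently, dividing the observation by the weight, $d\tilde Y(t) = h(t)^{-1}\,dY(t)$, turns the problem into the homoscedastic model $d\tilde Y(t) = \tilde S(t)\,dt + \epsilon\,dw(t)$ with $\tilde S = M_{1/h}S$ treated in Theorems \ref{si1} and \ref{si121}, so the whole statement reduces to the behaviour of the operator $M_{1/h}$.

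The point that must be handled with care is the transfer of the weak-topological conditions (boundedness of $\Theta_0$, disjointness of the weak closures, and the closed and $F_\sigma$ structure of $\Theta_0$ and $\Theta_1$) between $\Theta_i$ and $M_{1/h}\Theta_i$. This is exactly where both inequalities $c < h < C$ are needed: $M_{1/h}$ is bounded (from $h \ge c$) and its inverse $M_h$ is bounded (from $h \le C$), so $M_{1/h}$ is a linear bijection of $L_2(0,1)$ whose direct and inverse maps are both continuous for the weak topology, that is, a homeomorphism of $L_2(0,1)$ equipped with its weak topology onto itself. Hence $M_{1/h}$ preserves weak boundedness, commutes with passage to weak closures, and carries weakly closed sets and $F_\sigma$-sets to sets of the same type. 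Applying Theorems \ref{si1} and \ref{si121} to $M_{1/h}\Theta_0$ and $M_{1/h}\Theta_1$ and pulling the conclusions back through $M_{1/h}$ then yields Theorem \ref{t3}. The only genuinely nontrivial input is the weak-homeomorphism property; everything else is the bookkeeping already carried out in the proof of Theorem \ref{t21}.
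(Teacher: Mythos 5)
Your proposal is correct and matches the paper's own (very terse) argument: the paper simply declares heteroscedastic signal detection a particular case of Theorem \ref{t21}, i.e.\ the reduction $A=I$, $R=M_{h^2}$, $R^{-1/2}A=M_{1/h}$ followed by division by the weight $h$. Your additional verification that $M_{1/h}$ is a homeomorphism for the weak topology (using both bounds $c<h<C$), so that boundedness, disjoint weak closures, closedness and $F_\sigma$-structure all transfer, is exactly the bookkeeping the paper leaves implicit.
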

\subsection{Hypothesis testing on a solution of deconvolution problem \label{s2.5}}
Let we observe i.i.d.r.v.'s $Z_1,\ldots,Z_n$ having a density $h(z), z \in R^1,$ with respect to Lebesgue measure. It is known that $Z_i = X_i + Y_i, 1 \le i \le n,$ where $X_1,\ldots,X_n$ and $Y_1,\ldots,Y_n$ are i.i.d.r.v.'s with  densities $f(x), x \in R^1,$ and $g(y), y \in R^1,$ respectively.  The density $g$ is known.

 We wish to test a hypothesis $H_0: f \in \Theta_0$ versus  alternative $H_1: f \in \Theta_1$ where $\Theta_0, \Theta_1 \subset L_2(R^1)$.

Suppose
 $g \in L_2(R^1)$.

 Denote
 $$
\hat g(\omega) = \int_{-\infty}^{\infty} \exp\{i\omega x\} g(y) \, dy, \quad \omega \in R^1.
$$
Denote $\Psi_0$ and $\Psi_1$ the sets of probability measures of $\Theta_0$ and $\Theta_1$ respectively.
\begin{thm}\label{t23} Suppose the sets $\Psi_0$ and $\Psi_1$ are tight. Let
$$\mbox{\rm essinf}_{\omega \in (-a,a)}|\hat g(\omega)| \ne 0$$ for all $a > 0$. Then the statements of  Theorem \ref{si1} holds for the weak topology in $L_2(R^1)$.\end{thm}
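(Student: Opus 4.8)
The plan is to treat the deconvolution experiment as hypothesis testing on the law of the i.i.d. sample $Z_1,\dots,Z_n$, whose common distribution $\nu_f$ has density $h=f*g$, and then to import the characterisation of Theorem \ref{si1} through a homeomorphism between the weak topology of $L_2(R^1)$ on the density sets $\Theta_0,\Theta_1$ and the $\tau$-topology on the induced families of observed measures $\{\nu_f:f\in\Theta_0\}$ and $\{\nu_f:f\in\Theta_1\}$. Since the observations are i.i.d., distinguishability of these families is governed by the Le Cam--Schwartz Theorem \ref{b7}; tightness of $\Psi_0,\Psi_1$ together with the fixed factor $g$ makes the families $\{\nu_f\}$ tight, hence relatively compact by Prokhorov's theorem, and on tight families the weak and $\tau$ topologies coincide (Ganssler, Lemma 2.3), so both assertions of Theorem \ref{b7} apply. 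The whole argument is carried out on the Fourier side, where the convolution becomes the pointwise product $\hat h=\hat f\,\hat g$ and the hypothesis $\mbox{\rm essinf}_{(-a,a)}|\hat g|\neq0$ for every $a>0$ is exactly the band-limited invertibility needed to recover $\hat f$ from $\hat h$.

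First I would establish the two directions of the topological correspondence. For the forward direction, I would show that if $f_n\rightharpoonup f$ weakly in $L_2$ with $\{f_n\}$ drawn from the tight family, then $\nu_{f_n}\to\nu_f$ weakly: writing $\int\phi\,d\nu_{f_n}=\int f_n(x)\,(\phi*\check g)(x)\,dx$ for bounded continuous $\phi$ (with $\check g(x)=g(-x)$), the function $\phi*\check g$ is bounded and continuous because $g\in L_1$, and splitting it into its restriction to $[-M,M]$ and the tail controls the tail uniformly by tightness, while the truncated part lies in $L_2$ and is handled by weak $L_2$ convergence. For the reverse direction, I would use that weak convergence $\nu_{f_n}\to\nu_f$ forces pointwise convergence of characteristic functions (L\'evy), i.e.\ $\hat f_n\hat g\to\hat f\hat g$; dividing by $\hat g$, which is nonzero a.e.\ on every bounded band by the essinf hypothesis, yields $\hat f_n\to\hat f$ a.e., and together with the $L_2$ bound this upgrades (a.e.\ convergence plus boundedness, via Mazur's theorem) to $f_n\rightharpoonup f$ in $L_2$. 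The same essinf hypothesis gives injectivity of $f\mapsto\nu_f$, so distinct weak-$L_2$ limits yield distinct observed measures.

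With the correspondence in hand I would assemble the two assertions of Theorem \ref{si1}. For sufficiency (disjoint weak-$L_2$ closures imply distinguishability), I would note that on the bounded, hence weakly relatively compact, set $\Theta_0$ (Tikhonov's theorem, as in the proof of Theorem \ref{si1}) the weak topology is generated by band-limited functionals $f\mapsto\int f_{il}f$, namely those with $\hat f_{il}$ of compact support, since a norm-dense family of functionals induces the weak topology on bounded sets. A finite such covering separating $\Theta_0$ from $\Theta_1$ then transfers, by the correspondence, to disjoint $\tau$-closures of the observed families, and Theorem \ref{b7}\,{\sl i.}\ supplies uniformly consistent tests. Concretely these tests estimate each $\int f_{il}f=\frac1{2\pi}\int\overline{\hat f_{il}}\,\hat g^{-1}\,\hat h$ by substituting the empirical characteristic function $\hat h_n(\omega)=\frac1n\sum_j e^{i\omega Z_j}$, which is legitimate precisely because $\hat f_{il}/\hat g$ is bounded and supported in the band where $\mbox{\rm essinf}|\hat g|>0$. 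For necessity, if the weak-$L_2$ closures of $\Theta_0$ and $\Theta_1$ meet at some $f^*$, the forward direction places $\nu_{f^*}$ in both sequential $\tau$-closures of the observed families, whence Theorem \ref{seq1} yields indistinguishability.

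The main obstacle is the reverse continuity direction, where the ill-posedness of deconvolution is concentrated: $\hat g$ typically decays at infinity, so $\hat f=\hat h/\hat g$ cannot be recovered stably at high frequencies. The point I would stress is that high frequencies never need to be inverted: weak-$L_2$ convergence is tested against $L_2$ functionals, which on the $L_2$-bounded sets $\Theta_0,\Theta_1$ may be replaced by band-limited ones with uniformly small error, so that only the low-frequency band, where the essinf hypothesis makes $\hat g$ boundedly invertible, is relevant. Making this interplay precise, and simultaneously securing the forward direction through tightness so that no mass escapes to infinity in the passage from densities to observed laws, is the delicate part; once it is in place the reduction to Theorems \ref{b7} and \ref{seq1} is routine.
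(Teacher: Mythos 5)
Your proposal takes essentially the same route as the paper: the paper's own (two-sentence) proof also reduces the deconvolution experiment to i.i.d.\ hypothesis testing on the observed laws and invokes the machinery of Theorem \ref{b7}, with the remark that measures having $L_2$ densities convolved with $g$ form an equicontinuous (hence, with tightness, relatively compact) family playing the role of your compactness step, and the reduction ``as in Theorem \ref{t21}'' standing for your inversion of the convolution on the Fourier side via the essinf condition. Your version is simply a detailed implementation of that sketch --- forward continuity from tightness, reverse continuity via characteristic functions, division by $\hat g$ and Mazur's theorem, then Theorems \ref{b7} and \ref{seq1} --- and it correctly makes explicit where the $L_2$-boundedness and tightness hypotheses are actually used.
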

\begin{proof} Note that the sets of probability measures having the densities from $L_2(R^1)$ are  equicontinuous. Therefore we can implement the same reasoning as in the proof of Theorem \ref{t21} and to reduce the problem to the setup of Theorem \ref{b7}.\end{proof}
\subsection{Hypothesis testing on a  mean measure of Poisson random process \label{s2.3}} Let we be given $n$ independent realizations $\kappa_1,\ldots,\kappa_n$ of Poisson random process with mean measure $P$  defined on Borel sets of $\sigma$-field $\frak{B}$.
The problem is to test a hypothesis $H_0: P \in \Theta_0\subset\Theta$ versus  alternative $H_1: P \in \Theta_1\subset\Theta$ where  $\Theta$ is the set of all measures $P, P(\Omega) < \infty$.

Denote $N_n$  the number of atoms of Poisson random process $\kappa_1+\ldots+\kappa_n$.

We have ((6.2) in Arcones \cite{ar})
\begin{equation}\label{ar1}
\begin{split}&
P(|N_n - n\lambda| > x) \le \exp\{-n(\lambda+x)\log(1+x/\lambda) + nx\}\\& + \exp\{-n(\lambda - x)\log(1-x/\lambda)- nx\}
\end{split}
\end{equation}
with $\lambda = P(\Omega)$.

The conditional distribution of  $P(\kappa_1+\ldots+\kappa_n|N_n =k)$ coincide with the distribution of empirical probability measure $\hat Q_k$ of i.i.d.r.v's $X_1,\ldots,X_k$ having the probability measure $Q(A) = P(A)/P(\Omega), A \in \frak{B}$.
This statement and inequality (\ref{ar1}) allow to extend the results of section 4 on the problem of hypothesis testing on a mean measure of Poisson process. Below a version of Theorem \ref{b7} only will be provided.
\begin{thm}\label{t4.1}  {\sl i.} Let $\Theta_0$ be relatively compact in the $\tau_\Psi$ - topology. Then the hypothesis $H_0$ and the alternative $H_1$ are distinguishable if the closures of $\Theta_0$ and $\Theta_1$ are disjoint.

{\sl ii.} If $\Theta_0$  and $\Theta_1$ are  relatively compact in the $\tau$ - topology, then the converse holds if the closures of $\Theta_0$ and $\Theta_1$ in the $\tau$-topology are disjoint.
 \end{thm}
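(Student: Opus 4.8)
\section*{Proof proposal for Theorem~\ref{t4.1}}

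The plan is to push everything back to the i.i.d. setting of Theorem~\ref{b7} by exploiting the two facts recorded just above the statement: the concentration inequality (\ref{ar1}) for the total atom count $N_n$, and the identity of the conditional law of $\kappa_1+\cdots+\kappa_n$ given $N_n=k$ with the empirical measure $\hat Q_k$ of $k$ i.i.d. draws from the normalized measure $Q=P/P(\Omega)$. Writing $\lambda=P(\Omega)$ and $Q=P/\lambda$, observing $n$ superposed processes amounts to first observing $N_n\sim\mathrm{Poisson}(n\lambda)$ and then $N_n$ i.i.d. observations from $Q$. For each of the three topologies denoted $\tau_\Psi$ the constant function $1$ lies in $\Psi$, so $P\mapsto\lambda$ is $\tau_\Psi$-continuous; relative compactness of $\Theta_0$ then makes the total masses over $\Theta_0$ bounded (and, away from the degenerate zero measure, bounded below), which is exactly what is needed for (\ref{ar1}) to deliver a uniform exponential control of $N_n/n$ about $\lambda$.

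For part i I would first run the covering argument of Theorem~\ref{b7} i: disjointness of the closures together with compactness of $\mathrm{cl}_{\tau_\Psi}(\Theta_0)$ yields a finite family $f_1,\ldots,f_m\in\Psi$ for which the images $V_0=\{(\int f_j\,dP)_j:P\in\Theta_0\}$ and $V_1=\{(\int f_j\,dP)_j:P\in\Theta_1\}$ have disjoint closures in $R^m$, with $\mathrm{cl}(V_0)$ compact and hence separated from the closed set $\mathrm{cl}(V_1)$ by a margin $d>0$. I would then use the statistics $\hat\mu_j=n^{-1}\int f_j\,d(\kappa_1+\cdots+\kappa_n)=(N_n/n)(N_n^{-1}\sum_i f_j(X_i))$, which are unbiased for $\int f_j\,dP$. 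The factor $N_n/n$ concentrates exponentially about $\lambda$ by (\ref{ar1}), while, conditionally on $N_n=k$, the i.i.d. average $k^{-1}\sum_i f_j(X_i)$ concentrates exponentially about $\int f_j\,dQ$ by the multinomial exponential-decay bound (\ref{x1}) underlying Theorem~\ref{b7}; the product therefore concentrates about $\lambda\int f_j\,dQ=\int f_j\,dP$. Declaring $H_0$ when $(\hat\mu_1,\ldots,\hat\mu_m)$ lies within $d/2$ of $\mathrm{cl}(V_0)$ gives a uniformly consistent test, proving distinguishability.

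For the converse in part ii I would argue by contraposition. If $\mathrm{cl}_\tau(\Theta_0)\cap\mathrm{cl}_\tau(\Theta_1)\neq\emptyset$, relative $\tau$-compactness of both sets, under which weak and $\tau$ topologies agree (Ganssler \cite{ga}), lets me pass to a common point $P_*$ and to sequences $P_k^{(0)}\in\Theta_0$, $P_k^{(1)}\in\Theta_1$ with $P_k^{(0)}\to P_*$ and $P_k^{(1)}\to P_*$ in $\tau$. Continuity of $P\mapsto\lambda$ forces the rates to converge, so for each fixed $n$ the Poisson counts have laws converging in variation, while the normalized measures converge to $Q_*$ in $\tau$; by the i.i.d. indistinguishability mechanism behind Theorem~\ref{seq1} (Mazur's theorem forcing convex combinations to converge in variation) the convex hulls of the conditional i.i.d. laws meet in variation. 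Transporting these facts through the conditioning decomposition, the convex hulls of the full Poisson laws meet in variation as well, so Kraft's bound (\ref{m2}) yields $\alpha(K)+\beta(K)\ge 1$ for every test and every $n$; hence $H_0$ and $H_1$ are indistinguishable, contradicting distinguishability.

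I expect the main obstacle to be the bookkeeping in part i that couples the random, fluctuating ``sample size'' $N_n$ with the normalized shape $Q$: the separation furnished by the $\tau_\Psi$-closures lives at the level of the mean measure $P$, which blends the scale $\lambda$ and the shape $Q$, and one must check that the single product statistic $\hat\mu_j$ inherits a genuinely uniform exponential concentration by correctly combining (\ref{ar1}) with (\ref{x1}) across the bounded range of total masses. The parallel subtlety in part ii is verifying that variational proximity of the conditional i.i.d. laws, together with variational proximity of the Poisson counts, really propagates to variational proximity of the joined Poisson laws, which is precisely where the conditioning identity does the decisive work.
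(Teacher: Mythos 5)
Your reduction to the i.i.d.\ setting via the conditioning identity and the count bound (\ref{ar1}) is exactly the reduction the paper intends: for part {\sl i} the paper only says the proof is ``akin to the proof of Theorem \ref{b7} {\sl i}'' and omits it, and your covering argument plus the statistics $\hat\mu_j=n^{-1}\int f_j\,d(\kappa_1+\cdots+\kappa_n)$ is the natural way to carry that sketch out.

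For part {\sl ii}, however, the paper's own proof is genuinely more elementary than yours. After extracting sequences $P_k\in\Theta_0$, $Q_k\in\Theta_1$ converging to a common limit $P$ (sequential compactness from Ganssler's Theorem 2.6 in \cite{ga}), the paper never touches Kraft's bound (\ref{m2}) or Mazur's theorem. It observes directly that for every fixed $n$ and every test $K_n$ one has $E_{P_k}(K_n\mid N_n=l)\to E_P(K_n\mid N_n=l)$, $E_{Q_k}(1-K_n\mid N_n=l)\to E_P(1-K_n\mid N_n=l)$ and $P_k(N_n=l),\,Q_k(N_n=l)\to P(N_n=l)$, whence $\alpha(K_n)+\beta(K_n)\ge \alpha_{P_k}(K_n)+\beta_{Q_k}(K_n)\to 1$: once the observation laws converge setwise to a \emph{common} limit through the conditioning decomposition, indistinguishability is immediate, with no convex hulls at all. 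Your Kraft--Mazur route is sound in substance --- it needs the same key ingredient, namely $Q_k^{\otimes l}\to Q_*^{\otimes l}$ setwise along sequences (the product-continuity on compacts of Le Cam--Schwartz \cite{les} used in Theorem \ref{b7} {\sl ii} and behind Theorem \ref{seq1}) --- but it manufactures precisely the bookkeeping you flag at the end: the Mazur convex weights must be chosen \emph{simultaneously} for all count values $l$ (or Mazur must be applied to the full observation laws rather than coordinatewise in $l$). Testing expectations directly, as the paper does, dissolves that difficulty.

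The one genuine imprecision is in part {\sl i}, in your phrase ``across the bounded range of total masses.'' Only $\Theta_0$ is assumed relatively compact, so only the masses $\lambda=P(\Omega)$ over $\Theta_0$ are bounded; over $\Theta_1$ they may be unbounded, and then the variance of $\hat\mu_j$, which is $n^{-1}\int f_j^2\,dP\le n^{-1}\|f_j\|_\infty^2\,\lambda$, is not uniformly controlled, so the uniform type II bound for your test ``accept when $\hat\mu$ is within $d/2$ of $\mathrm{cl}(V_0)$'' does not follow as stated. The repair is cheap and uses an observation you already made: the constant function $1$ lies in $\Psi$ for all three topologies, so adjoin it to the separating family, i.e.\ include $\hat\mu_0=N_n/n$ among the statistics. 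The augmented images still have disjoint closures with $\mathrm{cl}(V_0)$ compact, and then for $P\in\Theta_1$ either $\lambda$ exceeds the $\Theta_0$-mass bound by a fixed constant, in which case $N_n/n$ alone exits the $d/2$-neighbourhood of $\mathrm{cl}(V_0)$ except on an event that is uniformly exponentially small by (\ref{ar1}), or $\lambda$ stays bounded, in which case your concentration argument (via (\ref{ar1}) and the conditional i.i.d.\ bound (\ref{x1})) applies uniformly. With that amendment part {\sl i} is complete and coincides with what the paper intends.
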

 \begin{proof}[ Proof of   Theorem \ref{t4.1} {\sl ii}] By Theorem 2.6 in Ganssler \cite{ga}, the sets $\Theta_0$ and $\Theta_1$ are relatively sequentially compact.

Suppose that the sets $\Theta_0$ and $\Theta_1$ have common limit point $P$ and are not indistinguishable.  Then there exist sequences
$ P_k \in \Theta_0$ and $Q_k \in \Theta_1$ converging to $P \in \Theta$.

For any test $K_n$, for any $l$, we have
\begin{equation*}
\lim_{k\to\infty} E_{P_k}(K_n|N_n=l) = E_P(K_n|N_n=l),
\end{equation*}
\begin{equation*}
\lim_{k\to\infty} E_{Q_k}(1-K_n|N_n=l) = E_P(1-K_n|N_n=l)
\end{equation*}
and
\begin{equation*}
\lim_{k\to\infty}P_k(N_n = l) =   \lim_{k\to\infty} Q_k(N_n = l) = P(N_n =l).
\end{equation*}
Hence the sets $\Theta_0$ and $\Theta_1$ are indistinguishable and we have a contradiction.\end{proof}

The proof of {\sl i.} is akin to the proof of  Theorem \ref{b7} {\sl i.} and is omitted.
\section{Appendix}
 \begin{proof}[Proof of Lemma \ref{l33}] By Proposition  \ref{pr1}, there are $m_i, i=1,2,$ and measurable  partitions $A_{i1},\ldots,A_{ik_i}$ of $\Omega^{m_i}$ such that the sets
$$
V_{0i} = \{v  = (v_1,\ldots,v_k) : v_1 = P^{m_i}(A_{i1}),\ldots,v_k = P^{m_i}(A_{ik_i}), P\in \Theta_0\} \subset R^{m_i}
$$
and
$$
V_{1i} = \{v  = (v_1,\ldots,v_k) : v_1 = P^{m_i}(A_{i1}),\ldots,v_k = P^{m_i}(A_{ik_i}), P\in \Theta_{1i}\} \subset R^{m_i}
$$
have disjoint closures for each $i=1,2$. Hereafter $P^{m_i}$ denotes $m_i$-fold product of probability measure $P$.

Since there is uniformly consistent estimator of $P^{m_1}(A_{11}),\ldots,P^{m_1}(A_{1k_1})$,  $P^{m_2}(A_{21}),\ldots,P^{m_2}(A_{2k_2})$, then there are uniformly consistent  tests for  testing the hypothesis $H_0: P \in \Theta_0$ versus the alternative $H_1: P \in \Theta_{11}\cup \Theta_{12}$.\end{proof}

 \begin{proof}[Proof of Theorem \ref{b3}] Let $\Theta_{0i}$  and $\Theta_{1i}, i=1,2,\ldots$ be sequences of subsets of $\Theta_{0}$  and $\Theta_{1}$ such that there are uniformly consistent sequences of tests for these subsets. By (\ref{x1}), for each $i$, there are tests $K_{ni}$ such that, for $n > n_{0i}$, we have
\begin{equation}\label{r1}
 \alpha(K_{ni}) \le \exp \{-c_in\} \quad \mbox{and} \quad \beta(K_{ni}) \le \exp\{-c_in\}.
\end{equation}
By  Borel-Cantelli  Lemma, for the proof of Theorem \ref{b3}, it suffices to define  sequence of tests $K_n$ such  that
\begin{equation}\label{r2}
\sum_{n=1}^\infty \alpha(K_n,P) < \infty \quad \mbox{\rm for each } \quad P \in \Theta_0
\end{equation}
and
\begin{equation}\label{r3}
\sum_{n=1}^\infty \beta(K_n,Q) < \infty \quad \mbox{\rm for each } \quad Q \in \Theta_1.
\end{equation}
Define the numbers $l_i$ such that $l_1= 1$ and
$$\exp\{-c_il_i\}(1 - \exp\{-c_i\})^{-1}  \le i^{-2}.$$
Choose a sequence
$ l_{i_0}<l_{i_1} < l_{i_2} < l_{i_3}<\ldots$
with $i_0 =1$ and $l_{i_s} > n_{0i_s}, 1 \le s < \infty$.

Put
$$
K_1 = K_{11},\ldots,K_{l_{i_1}} = K_{1l_{i_1}},
 $$
 $$
 K_{l_{i_1}+1} = K_{i_1,l_{i_1}+1},\ldots,K_{l_{i_2}} = K_{i_2,l_{i_2}}, K_{l_{i_2}+1} = K_{i_2,l_{i_2}+1}, \ldots
$$
 Then, for any $P \in \Theta_{0i_t}, 1 \le t < \infty$, we have
\begin{equation*}
\sum_{n=l_{i_t}}^\infty \alpha(K_n,P) \le \sum_{s=t}^\infty\sum_{n =l_{i_s}+1}^{l_{i_{s+1}}} \alpha(K_{i_sn})  \le
 \sum_{s=t}^\infty i_s^{-2} < \infty.
\end{equation*}
In the case of the alternative the reasoning is the same.
This implies (\ref{r2}) and (\ref{r3}).\end{proof}

\begin{proof}[ Proof of Theorem \ref{b4}] By (\ref{x1}), we get
\begin{equation}\label{x5}\begin{split}&
\sup_{P \in \Theta_0} P(K_n =1\,\, \mbox{for some}\,\, n> k) \le \sum_{n=k}^\infty \sup_{P \in \Theta_0} P(K_n =1 )\\& \le \sum_{n=k}^\infty \exp\{-cn\} \le C\exp\{-ck\}.
\end{split}
\end{equation}
The proof of (\ref{x3}) is similar.\end{proof}

\begin{proof}[Proof of Theorem \ref{b4a}] It suffices to show that there is sequence of
tests $K_n$ such  that
\begin{equation}\label{r2a}
\sum_{n=1}^\infty \alpha(K_n) < \infty.
\end{equation}
The tests $K_n$ defined in the proof of Theorem \ref{b3} with $\Theta_{0i}= \Theta_0$ satisfy (\ref{r2a}).\end{proof}
\begin{proof}[ Proof of Theorem \ref{5a} {\sl ii}]
Suppose the contrary. Then there are sequences $\tau_k \in \Theta_0$ and $\eta_k \in \Theta_1$ such that $\rho(\tau_k,\eta_k) \to 0$ as $k \to \infty$. Since $\Theta_0$ is relatively compact, there is $\tau \in \Psi$ and a subsequence $\tau_{k_l}\in \Theta_0$ such that $\tau_{k_l} \to \tau$ as $l \to \infty$. Then $\eta_{k_l} \to \tau$ as $l \to \infty$. Since $\Theta_0$ and $\Theta_1$ are saturated, by Theorem \ref{seq1}, we have a contradiction.
\end{proof}
\begin{proof}[Proof of Theorem \ref{b6w}] Suppose the functions $h_i, 1 \le i \le d$ are bounded. Define the signed measures $H_i, 1 \le i \le d,$ with the densities $\frac{dH_i}{dP} = h_i$. Define the set $$\Upsilon = \left\{G: G = \sum_{i=1}^d b_i H_i,  P + G \in \Lambda, b_i \in R^1, 1 \le i \le d \right\}.$$ On the set $\Upsilon$ the condition $A$ coincides with the definition of Hadamard derivative. Since Hadamard differentiablity implies uniform differentiability on compacts ( Shapiro \cite{sh}, Prop 3.3), we get
\begin{equation*}
\left| T(P + t_n G) - T(P) - t_n\int \vec h d G \right| = o(|t_n|) \,\,\,\,\,\mbox{as}\,\,\,\,\, t_n \to 0,\,\,\,\,n \to \infty
\end{equation*}
uniformly on $$ G \in \left\{H: H =\sum_{i=1}^d b_i H_i, |b_i| \le 1, 1 \le i \le d\right\}.$$

Hence we easily get that $T$ is saturated.

If $\vec h$ is unbounded, we can approximate $\vec h$ bounded functions and repeat the same reasoning.
\end{proof}
\begin{proof}[ Proof of Theorem \ref{si1} ii]
For any $S_1,S_2 \in L_2(0,1)$ denote
$$
(S_1,S_2) = \int_0^1 S_1(t) S_2(t) dt.
$$
We write the problem of signal detection in the coordinates of some orthonormal basis $\{\phi_j\}_{j=1}^\infty$. In this setting we observe Gaussian random vector
$$
y =y_\epsilon=\{y_j\}_{j=1}^\infty,\quad y_j = s_j+ \epsilon\xi_j,\quad \xi_j \sim N(0,1)
$$
with $s_j= (S,\phi_j)$.

Denote $s = \{s_j\}_{j=1}^\infty$ and $\xi= \{\xi_j\}_{j=1}^\infty$.

For $i=0,1,$ we define the sets
$$
\Psi_i = \{s: s=\{s_j\}_{j=1}^\infty, s_j= (S,\phi_j), S \in \Theta_i\}.
$$
Define the linear operator $A: l_2 \to l_2$ such that, for any $s \in l_2$, there holds $As = u$, where $u=\{u_j\}_{j=1}^\infty, u_j = s_j/j, 1 \le j < \infty$.

Define the sets $\Upsilon_i = \{u: u = As, s \in \Psi_i\}$.

Probability measures $\mu_\epsilon$ of random vectors $Ay_\epsilon$  are Radon measures and are tight in  $l_2$. Therefore we can implement Theorem \ref{tg1}.  Relative compactness of sets $\Upsilon_i, i=0,1,$ having disjoint closures, imply that there are finite number of functionals $f_1,\ldots,f_k$ and $\epsilon > 0$ such that for any $u_0=\{u_{0j}\}_{j=1}^\infty \in \Upsilon_0$ and for any $u_1=\{u_{1j}\}_{j=1}^\infty \in \Upsilon_1 $ there is functional $f_i = \{f_{ij}\}_{j=1}^\infty, 1 \le i \le k$, such that
\begin{equation}\label{zz1}
\sum_{j=1}^\infty f_{ij}  (u_{1j}- u_{0j}) > \epsilon.
\end{equation}
This implies that there is $m$ such that for all $u_0 \in \Upsilon_0$ and $u_1 \in \Upsilon_1 $
there holds
\begin{equation}\label{zz2}
\sum_{j=1}^m f_{ij}  (u_{1j}- u_{0j}) > \epsilon/2.
\end{equation}
Denote $l_{2m}$ linear subspace of $l_2$ generated by   $m$ first unit vectors of system of coordinates in $l_2$.

For  $i=0,1$ define the sets $\Upsilon_{im} = \Pi_m \Upsilon_{i}$ where $\Pi_m$ is the projection operator on
 $l_{2m}$.

Define the subset $\Gamma_m = A^{-1} l_{2m}$.

There holds $A^{-1} \Upsilon_{im} = \Pi_{\Gamma_m} \Psi_i$, where $\Pi_{\Gamma_m}$ is projection operator on $\Gamma_m$.

The closures of sets $A^{-1} \Upsilon_{0m}$ and  $A^{-1} \Upsilon_{1m}$ in the topology defined on $\Gamma_m$ are bounded and disjoint that implies Theorem statement.

\end{proof}


\begin{thebibliography}{99}

\bibitem{ar} Arcones, M. (2004). The large deviation principle for stochastic processes. II.  {\it Theory.Probab.Appl.,} {\bf 48} 19-44.

\bibitem{bah} Bahadur, R.R. and Savage, R.G. (1956). The nonexistence of certain statistical procedures in nonparametric problems. {\it Ann. Math. Statist.} {\bf 27} 1115-1122.

    \bibitem{bnv} Balakrishnan, N., Nikulin, M.S. and  Voinov, V. (2013). Chi-squared Goodness of Fit Tests with Applications. Elsevier, Oxford.

\bibitem{ba} Barron, A.R. (1989). Uniformly powerful goodness of fit tests. {\it Ann.Statist.} {\bf 17}  107-124.

    \bibitem{be} Berger, A. (1951). On uniformly consistent tests. {\it Ann.Math.Statist.,} {\bf 18} 289-293.

\bibitem{bog} Bogachev,V.I. (1997). {\it Gaussian measures}, Moskow, Nauka,  352p.

\bibitem{bo} Bogachev,V.I. (2000). {\it Measure Theory}. Springer, NY.

\bibitem{bu} Burnashev, M.V. (1979). On the minimax solution of inaccurately known signal in a white Gaussian noise . Background. {\it Theory Probab.Appl.} {\bf 24} 107-119.


\bibitem{dal}  Comminges, L. and  Dalalyan, A.S. (2013). Minimax testing of a composite null hypothesis defined via a quadratic functional in the model of regression. {\it Electronic Journal of Statistics} {\bf 7} 146-190.

\bibitem{co} Cover, T.M. (1973). On determing irrationality of the mean of a random variable. {\it Ann.Statist.} {\bf 1} 862-871.

  \bibitem{dep} Dembo, A. and Peres, Y. (1994) A topological criterion for hypothesis testing. {\it Ann.Statist.} {\bf 22}  106-117.

 \bibitem{dem}  Dembo, A. and  Zeitouni, O. (1993).
{\it Large Deviations Techniques
and Applications}.
 Jones and Bartlett, Boston.

\bibitem{de} Devroye, L. and Lugosi, G. (2002). Almost sure classification of densities. {\it J.Nonpar.Statist.} {\bf 14}  675-698.

 \bibitem{do} Donoho, D.L. (1988). One-sided inference about functionals of a density. {\it Annals of Statistics}. {\bf 16} 1390-1420.

 \bibitem{du} Dunford, N. and Schwartz, J.T.(1958). {\it Linear Operators,part I}, NY Interscience Publishers, New York.

\bibitem{er93} Ermakov, M.S.(1993). Large deviations of empirical measures and statistical tests. {\it Zapiski Nauchn.Semin.POMI RAN}, {\bf 207} 37-59 (English translation: {\it Journal of Mathematical Sciences},  (1996), 81:2379-2390).

\bibitem{er00} Ermakov, M.S. (2000). On distinguishability of two nonparametric sets of hypothesis. {\it Statist.Probab.Letters.}  {\bf 48} 275-282.

    \bibitem{er11} Ermakov, M.S. (2011). Nonparametric signal detection with small type I and type II error probabilities. {\it
Statistical Inference for Stochastic Processes}. {\bf 14},  1-19.

        \bibitem{ga} Ganssler, P.(1971). Compactness and sequential compactness on the space of measures.
{\it Z.Wahrsch.Verw. Gebiete} {\bf 17} 124-146.

    \bibitem{gor}  Groeneboom,P.,  Oosterhoff,J. and  Ruymgaart,F.H. (1979).
 Large deviation theorems for empirical probability measures. {\it
 Ann.Probab.}   {\bf 7} 553-586.

  \bibitem{ho} Hoeffding, W and Wolfowitz, J.(1958). Distinguishability of sets of distributions. {\it Ann.Math.Stat.} {\bf 29}  700-718.

      \bibitem{hosp} Horowitz, J. L. and Spokoiny, V.S.(2001). An adaptive, rate optimal test of a parametric model against a nonparametric alternative. {\it Econometrica} {\bf 69}, 599-631.

\bibitem{ih} Ibragimov,I.A. and Khasminskii, R.Z. (1977). On the estimation of infinitely dimensional parameter in Gaussian white noise. {\it Dokl.AN USSR} {\bf 236} 1053-1055.

    \bibitem{in93} Ingster Yu. I.,  Asymptotically minimax hypothesis testing for
nonparametric alternatives. I, II, III, {\it Mathematical Methods of
Statistics},   {\bf 2} (1993), 85--114, 171--189, 249--268.

\bibitem{ik} Ingster, Yu. I. and  Kutoyants, Yu.A.(2007). Nonparametric hypothesis testing for intensity
of the Poisson process.{\it Mathematical
Methods of Statistics,}   {\bf 16}  218--246.

\bibitem{is}  Ingster, Yu.I.,  Suslina I.A. (2002).
{\it Nonparametric Goodness-of-fit Testing under Gaussian Models}.
Lecture Notes in Statistics {\bf 169} Springer N.Y.


\bibitem{ja} Janssen, A. (2000). Global power function of goodness of fit tests. {\it Annals of Statistics},  {\bf 28} 239-253.

\bibitem{kr} Kraft, C. (1955). Some conditions for consistency and uniform consistency  of statistical procedures. {\it Iniv.Californ.Publ.Stat.} {\bf  2} 125-142.

    \bibitem{ku} Kulkarni, S.R. and Zeitouni, O. (1995). A general classification rule for probability measures. {\it Ann.Statist.} {\bf 23}  1393-1407.


    \bibitem{le73} Le Cam, L. (1973). Convergence of estimates under dimensionality restrictions. {\it Ann.Statist.} {\bf 1} 38-53.


\bibitem{les} Le Cam, L. and Schwartz, L. (1960). A necessary and sufficient conditions for the existence of consistent estimates. {\it Ann.Math.Statist.} {\bf 31}  140-150.

\bibitem{le} Lehmann, E.L. and Romano, J.P. (2005). {\it Testing Statistical Hypothesis}. Springer Verlag, NY .

            \bibitem{no} Nobel,A.B. (2006). Hypothesis testing for families of ergodic processes. {\it  Bernoulli}. {\bf 12} 251-269.

                    \bibitem{sch} Schwartz, L. (1965). On Bayes procedures. {\it Z.Wahrsch.Verw. Gebiete} {\bf 4} 10-26.

                        \bibitem{pf} Pfanzagl, J. (1968). On the existence of consistent estimates and tests. {\it Z.Wahrsch.Verw. Gebiete} {\bf 10} 43-62.

        \bibitem{sh} Shapiro, A. (1990). On concepts of directional differentiability. {\it J.Optimization Th. and Appl.} {\bf 66} 477-487.

    \bibitem{van} van der Vaart, A.W.  (1998). {\it Asymptotic Statistics}.
Cambridge University Press, Cambridge.

\bibitem{yo} Yosida, K. (1968). {\it Functional Analysis}. Springer, Berlin.

 \end{thebibliography}
\end{document}